\newcommand{\newsection}[1]
{\section{#1}\setcounter{theorem}{0} \setcounter{equation}{0} \par\noindent}
\newtheorem{theorem}{Theorem}
\newtheorem{lemma}[theorem]{Lemma}
\newtheorem{remark}[theorem]{Remark}
\newcommand{\beq}{ \begin{equation} }
\newcommand{\eeq}{ \end{equation} }
\newcommand{\br}{{\mathbb R}}
\newcommand{\bc}{{\mathbb C}}
\newcommand{\diag}{\mbox{diag}\ }
\newcommand{\brn}{ { \mathbb{R}^n } }
\newcommand{\two}{ { I \hspace{-3pt}I } }
\newcommand{\three}{ { I \hspace{-3pt}I \hspace{-3pt}I } }
\newcommand{\four}{ { I \hspace{-3pt}V } }
\newcommand{\anglex}[1]{\langle #1\rangle}
\newcommand{\real}{ \mbox{\rm{Re}\,} } 
\newcommand{\imaginary}{ \mbox{\rm{Im}\,} } 
\title{
The Cauchy problem 
\\
for a semilinear ordinary differential equation 
\\
in the homogeneous and isotropic spacetime
}
\author
{
Makoto NAKAMURA
\thanks
{
{Faculty of Science, Yamagata University, 
Kojirakawa-machi 1-4-12, Yamagata 990-8560, JAPAN.}
E-mail: \texttt{nakamura@sci.kj.yamagata-u.ac.jp}
}
%\ \ and \ \ 
%\thanks
%{
%}
}
\date{}
\begin{document}

\maketitle

\begin{abstract}
A semilinear ordinary differential equation is derived from a semilinear Schr\"odinger equation 
in the homogeneous and isotropic spacetime by the Ehrenfest theorem. 
The Cauchy problem for the equation is considered.
Exact solutions and nonexistence of global weak solutions of the equation 
are also considered in the de Sitter spacetime.
The effects of spatial expansion and contraction are studied.
\end{abstract}

\noindent
{\it Mathematics Subject Classification (2010)}: 
Primary 34A12; Secondary 34A34, 83C10. \\
%34A12 Initial value problems, existence, uniqueness, continuous dependence and continuation of solutions
%34A34 Nonlinear equations and systems, general
%83C10 Equations of motion
%35G20 Nonlinear higher-order equations
%35Q76 Einstein equations

\noindent
{\it Keywords}: 
ordinary differential equation,  
Cauchy problem,
homogeneous and isotropic spacetime. 

\newsection{Introduction}
The Cauchy problems for some semilinear Klein-Gordon equations have been considered 
in the de Sitter spacetime 
in \cite{Baskin-2013-AHP, Nakamura-2014-JMAA, Yagdjian-2012-JMAA}.
In some cases, the spatial expansion [resp. contraction] can be characterized as a dissipative [resp. anti-dissipative] term in the equations 
(for example, see the energy estimate (2.3) in \cite{Nakamura-2014-JMAA}), 
by which the existence-time of the solution is deeply affected.
A semilinear Schr\"odinger equation was derived from the semilinear Klein-Gordon equation 
in \cite{Nakamura-2015-JDE} by the nonrelativistic limit, and its Cauchy problem was considered.
In this paper, we derive a semilinear ordinary differential equation from the semilinear Schr\"odinger equation 
based on the Ehrenfest theorem, 
and we consider its Cauchy problem in more general spacetime known in Cosmology.

We consider the Cauchy problem for the semilinear ordinary differential equation given by 
\eqref{Cauchy-Y}, below, in the homogeneous and isotropic spacetime 
which is described by a scale-function given by 
\eqref{a}, below.
We start from the introduction  of the scale-function.
Let $n\ge1$, $\sigma\in \br$.
For $a_0>0$ and $a_1\in \br$, 
put $H:=a_1/a_0$  
which is called the Hubble constant.
When $\sigma\neq -1$ and $a_1\neq0$, we put 
\[
T_0:=-\frac{2a_0}{n(1+\sigma)a_1}.
\]
We define 
\[
T_1:=
\begin{cases}
T_0 & \mbox{if}\ \ (1+\sigma)a_1<0\ (\mbox{i.e., }T_0>0), 
\\
\infty & \mbox{if}\ \ (1+\sigma)a_1\ge0.
\end{cases}
\]
Under the assumption of the cosmological principle, 
namely, that the space is homogeneous and isotropic, 
the solution of the Einstein equations 
with the spatial flat curvature is given by 
the Friedmann-Lema\^{i}tre-Robertson-Walker metric (the FLRW metric) given by 
\begin{equation}
\label{Intro-RW}
-c^2(d\tau)^2
=
g_{\alpha\beta}dx^\alpha dx^\beta
=
-c^2(dx^0)^2+
a(x^0)^2
\sum_{j=1}^n (dx^j)^2,
\end{equation}
where 
$\tau$ is the proper time, 
and 
$a(\cdot)$ is the scale-function of the spacetime  
defined by 
\beq
\label{a}
a(t):=
\begin{cases}
a_0\left(1+\frac{n(1+\sigma)a_1t}{2a_0}\right)^{2/n(1+\sigma)} 
& \mbox{if}\ \sigma\neq -1, 
\\
a_0e^{a_1t/a_0} 
& \mbox{if}\ \sigma= -1
\end{cases}
\eeq
for $0\le t<T_1$ and $\sigma\in \br$ 
(see 
\cite{Carroll-2004-Addison, DInverno-1992-Oxford}
for the references of the Einstein equations and the FLRW metric).
We denote the first order derivative by the variable $t$ by 
\[
D_t:=\frac{d}{dt}.
\]
We note that $a(0)=a_0$ and $D_ta(0)=a_1$ hold.

The Minkowski spacetime is obtained when $a_1=0$ (i.e., $a(\cdot)=a_0$ is a constant), 
and the de Sitter spacetime is obtained when $\sigma=-1$. 
The scale-function $a(\cdot)$ defined by \eqref{a} blows up [resp. vanishes] at finite time $T_0>0$ 
when $a_1>0$ and $\sigma<-1$ [resp. $a_1<0$ and $\sigma>-1$], 
which is called Big-Rip [resp. Big-Crunch] in cosmology. 
The case $\sigma= -1$ shows the exponential expansion [resp. contraction]
of the space when $a_1>0$ [resp. $a_1<0$].
The case $a_1>0$ and $\sigma> -1$ 
[resp. $a_1<0$ and $\sigma< -1$] shows the polynomial expansion [resp. contraction] 
of the space. 
These models have been studied as some models of the universe.

We define a function $q_0(\cdot)$ and a weight function $A(\cdot)$ by 
\beq
\label{A}
q_0(t):=\frac{D_t a^2(t)}{a^2(t)}, 
\ \ 
A(t):=-\frac{1}{4}q_0(t)^2-\frac{1}{2} D_t q_0(t)
\eeq
for $0\le t<T_1$.
For $T$ with $0<T\le T_1$, 
we consider the Cauchy problem for a semilinear ordinary differential equation given by 
\beq
\label{Cauchy-Y}
\left\{
\begin{array}{l}
\displaystyle D_t^2 Y(t)+A(t) Y(t)+f(Y)(t)=0,
\\
Y(0)=Y_0,\ \ D_t Y(0)=Y_1 
\end{array}
\right.
\eeq
for $0\le t< T$, 
where 
$Y=(Y^1,\cdots,Y^n)$, 
$Y_0=(Y^1_0,\cdots,Y^n_0)\in \brn$, 
$Y_1=(Y^1_1,\cdots,Y^n_1)\in \brn$, 
\beq
\label{Def-Modulo}
|Y|:=\left\{\sum_{j=1}^n (Y^j)^2\right\}^{1/2},
\eeq
$\lambda\in \br$, $1<p<\infty$ 
and 
\beq
\label{Def-f(Y)}
f(Y):=\lambda |Y|^{p-1}Y\ \  \mbox{or}\ \  f(Y):=\lambda |Y|^p.
\eeq
The differential equation in \eqref{Cauchy-Y} is the Newtonian equation of motion  
and is derived from the semilinear Schr\"odinger equation 
in the homogeneous and isotropic spacetime  
with the scale-function $a(\cdot)$ in \eqref{a} by the Ehrenfest theorem 
(see Section \ref{Section-Derivation}, below).
We regard the solution of the Cauchy problem 
\eqref{Cauchy-Y} as the fixed point of the integral operator $\Psi$ defined by 
\eqref{Def-Psi}, below.

The differential equation in \eqref{Cauchy-Y} with $f(Y):=\lambda |Y|^{p-1}Y$ is rewritten as  
\beq
\label{Eq-Z}
D_s^2 Z+\frac{2\beta+1}{s} D_s Z+\frac{\beta^2+A(t(s))}{s^2} Z+\lambda s^{\beta(p-1)-2} |Z|^{p-1}Z=0
\eeq
by the change of variable $s:=e^t$ and $Z(s)=Y(t(s)) s^{-\beta}$ for $\beta\in \br$.
The equation 
$D_s^2Z+B(s) |Z|^{p-1}Z=0$ is known as the Emden-Fowler type equation for some function $B(\cdot)$. 
We refer to 
\cite{Kwong-Wong-2006-NA} for the nonoscillation, 
\cite{Li-2006-NA} for the blow-up rate,
\cite{Astashova-2019-DiffEq, 
Knezhevich-2007-DiffEq,
Mikic-2016-KragujevacJMath} 
for the asymptotic behavior, 
and 
\cite{Knezhevich-2009-DiffEq,Krtinic-Mikic-2019-MathNotes} 
for the uniqueness of the solution of the Emden-Fowler type equation.
The equation $D_s^2 Z+BZ/s^2-|Z|^{p-1}Z=0$ is considered in 
\cite{Astashova-2009-FunctDiffEq} for a constant $B$.
Our equation \eqref{Eq-Z} has a dissipative or anti-dissipative term 
$(2\beta+1)D_s Z/s$,  
and the coefficient of $Z/s^2$ is a function $\beta^2+A(t(s))$ 
dependent on the scale-function in \eqref{a}.

We say that $Y$ is a global solution of \eqref{Cauchy-Y} 
if $Y$ exists on the time-interval $[0,T_1)$  
since the spacetime does not exist after $T_1$.
Let us consider the four cases (i), (ii), (iii) and (iv) given by
\beq
\label{i-ii-iii}
\begin{array}{lll}
{\rm (i)} & a_1>0, & \sigma>-1+\frac{2}{n}, 
\\
{\rm (ii)} & a_1=0, & \sigma\in \br, 
\\
{\rm (iii)} & a_1\neq0, & \sigma=-1+\frac{2}{n},
\\
{\rm (iv)} & a_1<0, & \sigma>-1+\frac{2}{n}.
\end{array} 
\eeq
The scale-function $a(t)=a_0+a_1t$ given in the case (iii) 
corresponds to the scale-function for the Milne universe.
In the cases (i), (ii) and (iii), we have 
$A\ge0$ and $D_t A\le 0$ on $[0,T_1)$ 
(see Lemma \ref{Lem-9}, below).
We put 
\[
D:=\sqrt{A(0)}\, |Y_0|+|Y_1|.
\]

Let us consider the case (i).
For $T>0$ and $R>0$, we define the norm $\|\cdot\|_{X(T)}$ by 
\beq
\label{Def-X-Norm}
\|Y\|_{X(T)}:=\|D_t Y\|_{L^\infty((0,T))}
+
\|\sqrt{A}\, Y\|_{L^\infty((0,T))}
+
\|\sqrt{-D_t A}\, Y\|_{L^2((0,T))},
\eeq
and the function spaces $X(T)$ and $X(T,R)$ by 
\[
X(T):=\{Y;\ \|Y\|_{X(T)}<\infty\}
\ \ \mbox{and}\ \ 
X(T,R):=\{Y;\ \|Y\|_{X(T)}\le R\}.
\]

We obtain the local and global solutions of the problem \eqref{Cauchy-Y} as follows.

\begin{theorem}
[Cauchy problem for the case (i)]
\label{Thm-1}
Let $a_1>0$ and $\sigma>-1+2/n$ (so that, $T_0<0$, $T_1=\infty$).
Let $\lambda\in \br$ and $1<p<\infty$.
Let $q_\ast$ be an arbitrary number which satisfies $1\le q_\ast\le \infty$ and $1-p/2\le 1/q_\ast$.

(1)
For any $Y_0\in \brn$ and $Y_1\in \brn$, 
there exists $T>0$ and $R>0$ 
such that the Cauchy problem \eqref{Cauchy-Y} has 
a unique solution $Y\in X(T,R)$, 
where $T>0$ can be arbitrarily taken as 
\beq
\label{Thm-1-1000}
T\le 
\begin{cases}
-T_0\left\{
\left(
1-\frac{C}{T_0
D^{(p-1)q_\ast}
}
\right) 
^{1/(p+1)q_\ast}-1
\right\}
& \mbox{if}\ q_\ast<\infty, \ \ 0\le D<\infty, 
\\
-T_0
\left(
\frac{ C }{D^{(p-1)/(p+1)} } -1
\right) 
& \mbox{if}\ q_\ast=\infty,\ \ D< C^{(p+1)/(p-1)}
\end{cases}
\eeq
for some constant $C>0$ which is independent of $Y_0$ and $Y_1$.

(2) 
The solution $Y$ in (1) satisfies $Y\in C^1([0,T))$.

(3) 
The solution $Y$ in (1) is unique in $C^1([0,T))\cap X(T)$.

(4) 
If $\lambda\ge0$ and $f(Y)=\lambda |Y|^{p-1}Y$, 
then the solution $Y$ in (1) is a global solution.
Namely, $T$ can be taken as $T=T_1$.
\end{theorem}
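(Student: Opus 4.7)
My approach is to solve \eqref{Cauchy-Y} by a Banach fixed-point argument for the integral operator $\Psi$ of \eqref{Def-Psi} on the closed ball $X(T,R)\subset X(T)$, choosing $R$ of the order of $D$ and reading off the admissible $T$ from a nonlinear estimate that reflects the explicit shape of $A(t)$ in case (i). Items (2)--(3) are by-products of this construction and a standard Gronwall argument, while (4) follows from one additional a priori identity specific to the sign-definite nonlinearity.

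The engine of the proof is the linear energy estimate. For $U$ satisfying $D_t^2U+AU=g$ with data $(Y_0,Y_1)$, I would multiply by $D_tU$ and integrate, yielding
\[
\frac{1}{2}|D_tU(t)|^2+\frac{1}{2}A(t)|U(t)|^2+\frac{1}{2}\int_0^t(-D_tA)|U|^2\,ds=\frac{1}{2}D^2+\int_0^t g\cdot D_tU\,ds.
\]
Because $A\ge 0$ and $D_tA\le 0$ in case (i) by Lemma \ref{Lem-9}, the three quantities on the left are nonnegative and, after taking suprema, reassemble into $\|U\|_{X(T)}^2$; Cauchy--Schwarz on the source term then gives $\|U\|_{X(T)}\lesssim D+\|g\|_{L^1((0,T))}$. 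Setting $g=-f(Y)$ and $U=\Psi Y$, the whole argument collapses to controlling $\int_0^T|Y|^p\,ds$ on $X(T,R)$. I would combine the pointwise inequality $|Y(s)|\le\|\sqrt{A}\,Y\|_{L^\infty}/\sqrt{A(s)}\le R/\sqrt{A(s)}$ with the explicit formula $A(s)=A(0)(1-s/T_0)^{-2}$ valid throughout case (i), and then apply H\"older's inequality with exponent $q_*$ (legitimate exactly because of the hypothesis $1-p/2\le 1/q_*$) to obtain a bound of the form
\[
\int_0^T|Y|^p\,ds\le C(A(0),|T_0|,p,q_*)\,R^p\,\bigl\{(1+T/|T_0|)^{(p+1)q_*}-1\bigr\}^{1/q_*}.
\]
With $R\sim D$, the requirement that $\Psi$ map $X(T,R)$ into itself---together with the difference bound $||y_1|^{p-1}y_1-|y_2|^{p-1}y_2|\le p(|y_1|+|y_2|)^{p-1}|y_1-y_2|$ used for contractivity---reduces to $D^{(p-1)q_*}\{(1+T/|T_0|)^{(p+1)q_*}-1\}\le c$, which upon solving for $T$ is exactly \eqref{Thm-1-1000}.

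Items (2) and (3) are by-products: $C^1$ regularity follows because $Y=\Psi Y$ is represented through the integral formula \eqref{Def-Psi}, manifestly $C^1$ in $t$. For uniqueness in the larger space $C^1([0,T))\cap X(T)$, given two solutions $Y,\widetilde Y$ with common data I would apply the linear energy identity to $W:=Y-\widetilde Y$ (whose source $f(\widetilde Y)-f(Y)$ is controlled by $p(|Y|+|\widetilde Y|)^{p-1}|W|$); a Gronwall-type argument on the resulting integral inequality, iterated if necessary over subintervals of $[0,T)$, then forces $W\equiv 0$.

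For (4), when $\lambda\ge 0$ and $f(Y)=\lambda|Y|^{p-1}Y$ the nonlinearity is a gradient, $f(Y)\cdot D_tY=\lambda(p+1)^{-1}D_t|Y|^{p+1}$, and the energy identity upgrades to
\[
\tfrac12|D_tY(t)|^2+\tfrac12A(t)|Y(t)|^2+\tfrac{\lambda}{p+1}|Y(t)|^{p+1}+\tfrac12\int_0^t(-D_tA)|Y|^2\,ds=\tfrac12D^2+\tfrac{\lambda}{p+1}|Y_0|^{p+1},
\]
with every left-hand term nonnegative. This delivers a time-independent bound on $\|Y\|_{X(T)}$ depending only on the initial data, so the standard continuation argument---restarting the local theory at a time $t_\ast$ approaching the putative maximal existence time and exploiting that \eqref{Thm-1-1000} depends on the data only through the quantity that the a priori estimate controls---extends the solution all the way to $T_1=\infty$. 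The main obstacle in this program is the nonlinear H\"older step: aligning the pointwise $|Y|\le R/\sqrt A$ bound with the power-law profile of $A(s)$ so that the resulting integral reproduces exactly the $(p+1)q_*$ and $(p-1)q_*$ scalings of \eqref{Thm-1-1000} demands careful bookkeeping of how the powers of $A(0)$, $|T_0|$, and $1+T/|T_0|$ combine against the $D^{p-1}$ contribution coming from the linear side.
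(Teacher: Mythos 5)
Your overall architecture coincides with the paper's: the solution is obtained as the fixed point of $\Psi$ in \eqref{Def-Psi} on $X(T,R)$ with $R\sim D$, the linear input is the energy estimate of Lemma \ref{Lem-10} (valid since $A\ge 0$ and $D_tA\le 0$ in case (i) by Lemma \ref{Lem-9}), contraction uses the difference bound of Lemma \ref{Lem-11}, and (4) is the conserved energy of (2) in Lemma \ref{Lem-10} plus continuation with a data-uniform step, exactly as in the paper. Your Gronwall proof of (3) is a harmless variant of the paper's argument (which instead sets $T_\ast:=\sup\{t;\ Y(t)=Z(t)\}$ and runs the contraction estimate on a short interval $(T_\ast,T_\ast+\varepsilon)$), and your ``manifestly $C^1$'' remark for (2) glosses the two-step bootstrap of (6) in Lemma \ref{Lem-7}, where $f(Y)\in L^1$ first gives only $Y\in C^0$ and then $f(Y)\in C^0$ gives $Y\in C^1$; both points are minor.

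The genuine gap is in the key nonlinear estimate, and it is not ``bookkeeping''. From the pointwise bound $|Y(s)|\le R/\sqrt{A(s)}$ alone you get $\int_0^T|Y|^p\,ds\le R^p\int_0^T A^{-p/2}\,ds$, and since the integrand on the right is a fixed function, there is nothing left for H\"older to act on: this route produces only the $q_\ast=1$ case of your claimed bound, namely $R^p\,|T_0|\{(1+T/|T_0|)^{p+1}-1\}/(p+1)$, while applying H\"older in $ds$ instead would give the wrong exponent $pq_\ast+1$ in place of $(p+1)q_\ast$. To obtain the stated $q_\ast$-scaling, and hence \eqref{Thm-1-1000} for general $q_\ast$, you must spend the third component $\|\sqrt{-D_tA}\,Y\|_{L^2((0,T))}$ of the norm \eqref{Def-X-Norm}: since $A\sim q_0^2$ while $-D_tA\sim q_0^3$ by Lemma \ref{Lem-9}, the $L^2$ piece carries an extra half-power of $q_0$ per unit used. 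The paper implements this by interpolation: with $A_q:=A^{1/2-1/q}(-D_tA)^{1/q}$ one has $\|A_qY\|_{L^q}\le\|\sqrt{A}\,Y\|_{L^\infty}^{1-2/q}\|\sqrt{-D_tA}\,Y\|_{L^2}^{2/q}$ and $A_q\sim q_0^{1+1/q}$, and then H\"older with exponents chosen so that $1/q_\ast=1-(p-1)/q_1-1/q_2$ with $q_1,q_2\in[2,\infty]$ yields $\|f(Y)\|_{L^1((0,T))}\lesssim \|q_0^{-p-1+1/q_\ast}\|_{L^{q_\ast}((0,T))}\|Y\|_{X(T)}^p$, whose explicit evaluation gives exactly the factor $\bigl[\,|T_0|\{(1+T/|T_0|)^{(p+1)q_\ast}-1\}\,\bigr]^{1/q_\ast}$ of \eqref{Thm-1-1000}. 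The hypothesis $1-p/2\le 1/q_\ast$ is precisely the solvability condition for these exponents (equivalently, in a single-split version $|Y|^p=(\sqrt{A}|Y|)^{p-\theta}(\sqrt{-D_tA}|Y|)^{\theta}A^{-(p-\theta)/2}(-D_tA)^{-\theta/2}$ with $\theta=2(1-1/q_\ast)\le\min\{2,p\}$), not a license to apply H\"older after the pointwise bound. Once this interpolation step is in place, your mapping and contraction conditions and the solve-for-$T$ computation do recover \eqref{Thm-1-1000}, and your continuation argument for (4) works since the restarted bound, with $T_0$ replaced by $T_0-t_\ast$, stays bounded below as $t_\ast$ grows.
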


Let us consider the cases (ii) and (iii).
For $T>0$, $R_0>0$ and $R_1>0$, 
we put
\[
X(T,R_0,R_1):=
\left\{
Y;\ \|Y\|_{L^\infty((0,T))}\le R_0,
\ \|D_t Y\|_{L^\infty((0,T))}\le R_1
\right\}.
\]

\begin{theorem}
[Cauchy problem for the cases (ii) and (iii)]
\label{Thm-2}
Let $a_1=0$ and $\sigma\in \br$ (so that, $T_1=\infty$), 
or $a_1\neq0$ and $\sigma=-1+2/n$ 
(so that, $T_1=\infty$ when $a_1>0$, 
and $T_1=T_0>0$ when $a_1<0$).
Let $\lambda\in \br$ and $1<p<\infty$.
Then the following results hold. 
Moreover, the results from (2) to (4) in Theorem \ref{Thm-1} hold 
with $X(T)$ replaced by 
\[
\left\{
Y;\ \|Y\|_{L^\infty((0,T))}<\infty,\ \ \|D_t Y\|_{L^\infty((0,T))}<\infty
\right\}.
\]

(1)
For any $Y_0\in \brn$ and $Y_1\in \brn$, 
there exists $T>0$, constants $C_0>0$ and $C>0$   
such that the Cauchy problem \eqref{Cauchy-Y} has 
a unique solution $Y\in X(T,R_0, R_1)$ 
for any $R_0$ and $R_1$ with $R_0\ge 2|Y_0|$ and $R_1\ge C_0|Y_1|$,  
where $T$ can be arbitrarily taken as 
\beq
\label{Thm-2-T}
0<T\le 
\min\left\{
\frac{C}{ R_0^{p-1} },\ \frac{C}{ R_0^{(p-1)/2} },\ \frac{CR_1}{R_0^p},\ \frac{R_0}{2R_1}
\right\}.
\eeq
Here, the constants $C_0$ and $C$ are independent of $Y_0$ and $Y_1$.

(2) 
When $\sigma=-1+2/n$ and $a_1<0$ (so that, $T_1=T_0>0$), 
if $|Y_0|$ and $|Y_1|$ are sufficiently small,
then the solution $Y$ in (1) is a global solution.
\end{theorem}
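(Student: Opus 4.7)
The starting observation is that $A(\cdot)\equiv 0$ in both cases (ii) and (iii). This is immediate in (ii) since $a(\cdot)\equiv a_0$; in (iii) we have $n(1+\sigma)=2$, so $a(t)=a_0+a_1t$, $q_0(t)=2a_1/(a_0+a_1t)$ and $D_tq_0(t)=-2a_1^2/(a_0+a_1t)^2$, and the two terms in \eqref{A} cancel. Hence \eqref{Cauchy-Y} reduces to the pure ODE $D_t^2Y+f(Y)=0$, which is equivalent to the fixed-point equation $Y=\Psi Y$ for
\[
\Psi Y(t):=Y_0+tY_1-\int_0^t(t-s)f(Y(s))ds.
\]

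For part (1), the plan is to apply the Banach fixed-point theorem on $X(T,R_0,R_1)$. Using $|f(Y)|\le|\lambda||Y|^p$ one gets $|D_t\Psi Y(t)|\le|Y_1|+|\lambda|TR_0^p$, so taking $C_0\ge 2$ and $T\le CR_1/R_0^p$ makes this bound $\le R_1$. Writing $\Psi Y(t)=Y_0+\int_0^tD_s\Psi Y(s)ds$ then gives $|\Psi Y(t)|\le|Y_0|+TR_1\le R_0$ provided $R_0\ge 2|Y_0|$ and $T\le R_0/(2R_1)$. The standard pointwise Lipschitz bound $|f(Y)-f(\tilde Y)|\le C|\lambda|R_0^{p-1}|Y-\tilde Y|$ on the ball of radius $R_0$ yields
\[
\|D_t(\Psi Y-\Psi\tilde Y)\|_{L^\infty}\le C|\lambda|R_0^{p-1}T\|Y-\tilde Y\|_{L^\infty}
\]
and the analogous estimate $\|\Psi Y-\Psi\tilde Y\|_{L^\infty}\le C|\lambda|R_0^{p-1}T^2\|Y-\tilde Y\|_{L^\infty}$; contraction requires $T\le C/R_0^{p-1}$ and $T\le C/R_0^{(p-1)/2}$. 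Taking the minimum of all four restrictions recovers \eqref{Thm-2-T}. The properties (2)--(4) transfer with minor modifications: $Y\in C^1$ since $f(Y)\in L^\infty$ renders the integral formula for $D_tY$ continuous, uniqueness in the enlarged class follows from Gronwall applied to the equation for $Y-\tilde Y$, and the global statement when $\lambda\ge 0$ and $f(Y)=\lambda|Y|^{p-1}Y$ follows from conservation of the energy $|D_tY|^2/2+\lambda|Y|^{p+1}/(p+1)$, which keeps $|D_tY|$ bounded and hence $|Y|$ at most linear on any finite interval.

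For part (2), we are in case (iii) with $a_1<0$, so that $T_1=-a_0/a_1$ is finite. Since $A\equiv 0$ the equation is $D_t^2Y+f(Y)=0$ on $[0,T_1)$, and I would run a bootstrap on the energy $E(t):=|Y(t)|^2+|D_tY(t)|^2$. Differentiating and using $|f(Y)|\le|\lambda||Y|^p$ yields
\[
|D_tE(t)|\le 2|Y||D_tY|+2|D_tY||f(Y)|\le E(t)+C|\lambda|E(t)^{(p+1)/2}.
\]
So long as $C|\lambda|E(t)^{(p-1)/2}\le 1$, we get $D_tE\le 2E$, hence $E(t)\le E(0)e^{2T_1}$ by Gronwall. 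Picking $M=(C|\lambda|)^{-2/(p-1)}$ and requiring $E(0)\le M\,e^{-2T_1}/2$ closes the bootstrap; the resulting smallness threshold on $(|Y_0|,|Y_1|)$ depends on $|\lambda|$, $p$, and $T_1$. Combined with the continuation criterion stemming from part (1)---the local existence time has a uniform lower bound as long as the controlling quantities $R_0,R_1$ stay bounded---this extends the solution up to $T_1$.

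The main technical point is closing the bootstrap uniformly on the full finite interval $[0,T_1)$; everything else is routine once the a priori bound on $E$ is secured, because the finiteness of $T_1$ means no further decay or dispersive mechanism is required.
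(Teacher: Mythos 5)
Your proof of part (1) is essentially the paper's: with $A\equiv 0$ (which you verify correctly in case (iii), in agreement with (5) of Lemma \ref{Lem-9}) the kernel in \eqref{Def-Psi} reduces to $\rho_{12}(t,s)=t-s$, so your operator is the paper's $\Psi$, and your four restrictions on $T$ are precisely the conditions $2TR_1\le R_0$, $2CTR_0^p\le R_1$, $2CTR_0^{p-1}\le 1$, $2CT^2R_0^{p-1}\le 1$ that the paper collects into \eqref{Thm-2-T}; your transfer of (2)--(4) also matches in substance (the paper likewise invokes the conserved energy $\frac{1}{2}|D_tY|^2+\frac{\lambda}{p+1}|Y|^{p+1}$ for (4), splitting off $\lambda=0$ as the linear case, which your formulation handles uniformly, and your Gronwall uniqueness is a mild simplification of the paper's connectedness argument). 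Where you genuinely diverge is part (2). The paper uses no a priori estimate or bootstrap at all: since $T_1=T_0$ is finite, it simply re-reads \eqref{Thm-2-T} with $T=T_0$, sets $R_0=2T_0R_1$ so that the fourth quantity equals $T_0$ exactly, and takes $R_0$ small so that the first three quantities exceed $T_0$; then one single application of the local theorem already produces the solution on all of $[0,T_0)=[0,T_1)$, and smallness of $|Y_0|,|Y_1|$ merely guarantees $R_0\ge 2|Y_0|$ and $R_1\ge C_0|Y_1|$. Your route --- the differential inequality $|D_tE|\le E+C|\lambda|E^{(p+1)/2}$ for $E=|Y|^2+|D_tY|^2$, the bootstrap closed by $E(0)\le Me^{-2T_1}/2$ with $M=(C|\lambda|)^{-2/(p-1)}$, and the uniform continuation criterion from part (1) --- is correct as stated (the open--closed argument closes because $Y\in C^1$, and your threshold depending on $T_1$, $\lambda$, $p$ is of the same nature as the paper's), but it is heavier machinery than the situation requires, since the finiteness of $T_1$ is already fully exploited by the one-shot local argument. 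What your version buys is an explicit a priori bound $E(t)\le E(0)e^{2T_1}$ and a scheme that would survive settings where the local existence time cannot be pushed to $T_0$ in a single step; what the paper's buys is brevity and a smallness condition read off directly from \eqref{Thm-2-T}.
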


In (2) in Theorem \ref{Thm-2}, 
we obtain global solutions for small data 
when the space is contracting (i.e., $a_1<0$), 
while we have only local solutions in (1) in the Minkowski spacetime (i.e., $a_1=0$).

\vspace{10pt}

Let us consider the case (iv).
We note $T_1=T_0>0$, $A>0$ and $D_tA>0$ on $[0,T_1)$ in this case 
(see (7) in Lemma \ref{Lem-9}, below).
We put 
\begin{eqnarray}
\|Y\|_{X'(T)}
&:=& \|A^{-1/2}D_tY\|_{L^\infty((0,T))}
+\|Y\|_{L^\infty((0,T))}+\|A^{-1}\sqrt{D_tA} D_t Y\|_{L^2((0,T))},
\nonumber\\
X'(T)
&:=&\{Y;\ \|Y\|_{X'(T)}<\infty\},
\nonumber\\
X'(T,R)
&:=&\{Y;\ \|Y\|_{X'(T)}\le R\},
\nonumber\\
D'
&:=&|Y_0|+A(0)^{-1/2} |Y_1|
\label{Def-D'}
\end{eqnarray}
for $0<T\le T_1$ and $R>0$.

\begin{theorem}
[Cauchy problem for the case (iv)]
\label{Thm-3}
Let $a_1<0$ and $\sigma>-1+2/n$ (so that, $T_1=T_0>0$).
Let $\lambda\in \br$ and $1<p<\infty$.
Then there exists constants $C_0>0$ and $C>0$ such that the following results hold.

(1)
For any $Y_0\in \brn$ and $Y_1\in \brn$, 
there exists $T$ with $0<T< T_1$, 
constants $C_0>0$ and $C>0$  
such that the Cauchy problem \eqref{Cauchy-Y} has 
a unique solution $Y\in X'(T,C_0D')$, 
where $T$ can be arbitrarily taken under the condition 
\beq
\label{Thm-3-1000}
\frac{C|\lambda|a_0 T}{|a_1|}\cdot \left(1-\frac{T}{2T_0}\right) \cdot (C_0D')^{p-1}\le 1.
\eeq
Here, the constants $C_0$ and $C$ are independent of $Y_0$ and $Y_1$.

(2) 
If $D'$ is sufficiently small such that 
\beq
\label{Thm-3-2000}
\frac{2C|\lambda|a_0^2}{n(1+\sigma)|a_1|^2} \cdot (C_0 D')^{p-1}\le 1,
\eeq
then the solution $Y$ in (1) is a global solution.
Namely, $T$ can be taken as $T=T_1$.

(3) 
The solution $Y$ in (1) satisfies $Y\in C^1([0,T))$.

(4) 
The solution $Y$ in (1) is unique in $C^1([0,T))\cap X'(T)$.

(5) 
If $\lambda\ge0$ and $f(Y)=\lambda |Y|^{p-1}Y$, 
then the solution $Y$ in (1) is a global solution.
\end{theorem}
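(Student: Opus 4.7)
The plan is to cast the Cauchy problem \eqref{Cauchy-Y} as a fixed-point problem for the integral operator $\Psi$ from \eqref{Def-Psi}, and to run Banach's contraction mapping principle in the closed ball $X'(T,C_0D')$ for a suitable absolute constant $C_0$. The distinctive feature of case (iv) is that the scale-function contracts to zero at $T_0$, so $A(t)$ blows up there; this is precisely why the weight $A^{-1/2}$ is placed on $D_tY$ in the definition of $X'(T)$.

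The central tool is the weighted energy identity, obtained by multiplying $D_t^2Y+AY=F$ by $A^{-1}D_tY$:
\begin{equation*}
D_t\left(\frac{|D_tY|^2}{2A}+\frac{|Y|^2}{2}\right)+\frac{D_tA}{2A^2}|D_tY|^2=-\frac{F\cdot D_tY}{A}.
\end{equation*}
Since $D_tA>0$ in case (iv) by Lemma \ref{Lem-9}, the left-hand side, after integration from $0$ to $t$, controls each of the three components of $\|Y\|_{X'(T)}$ in terms of $D'$ and the nonlinear contribution $\int A^{-1}|F||D_tY|\,ds$. Using the explicit form $A(t)=k(1-k)/(T_0(1-t/T_0))^2$ with $k:=2/(n(1+\sigma))\in(0,1)$, one computes
\begin{equation*}
\int_0^T A(s)^{-1/2}\,ds=\frac{T(1-T/(2T_0))}{\sqrt{A(0)}},
\end{equation*}
which is comparable to $a_0T(1-T/(2T_0))/|a_1|$ modulo a constant depending only on $n$ and $\sigma$, because $T_0=2a_0/(n(1+\sigma)|a_1|)$ and $\sqrt{A(0)}=\sqrt{k(1-k)}/T_0$. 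Bounding $|f(Y)|\le|\lambda||Y|^p$ and using H\"older in the form $\int A^{-1}|f(Y)||D_tY|\,dt\le\|A^{-1/2}D_tY\|_\infty\|Y\|_\infty^p\int A^{-1/2}\,dt$ then delivers the self-mapping bound $\|\Psi Y\|_{X'(T)}\le C_0D'$ and, repeating the argument for $\Psi Y-\Psi\tilde Y$ with $|f(Y)-f(\tilde Y)|\le C(|Y|^{p-1}+|\tilde Y|^{p-1})|Y-\tilde Y|$, a contraction bound $\|\Psi Y-\Psi\tilde Y\|_{X'(T)}\le\frac{1}{2}\|Y-\tilde Y\|_{X'(T)}$, both precisely under the condition \eqref{Thm-3-1000}. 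This yields parts (1), (3) (via the integral identity $D_tY(t)=Y_1-\int_0^t(AY+f(Y))\,ds$, which gives the continuity of $D_tY$), and (4).

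Part (2) is immediate, since evaluating \eqref{Thm-3-1000} at $T=T_1=T_0$ gives $T(1-T/(2T_0))=T_0/2$, which after substituting $T_0=2a_0/(n(1+\sigma)|a_1|)$ produces exactly \eqref{Thm-3-2000}. For part (5), I would exploit the defocusing sign: writing $f(Y)\cdot D_tY=\lambda(p+1)^{-1}D_t(|Y|^{p+1})$ and integrating by parts inside the energy identity adds the nonnegative boundary term $\lambda|Y(t)|^{p+1}/((p+1)A(t))$ and the nonnegative integral $\int_0^t\lambda|Y|^{p+1}D_sA/((p+1)A^2)\,ds$ to the left-hand side, since $\lambda\ge0$ and $D_sA>0$. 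The resulting sign-definite inequality keeps $\|Y\|_{X'(T')}$ uniformly bounded for every $T'<T_1$, and a standard continuation argument, restarting the local theory near the supremum of the existence interval with the uniform bound on the shifted $D'$, extends the solution up to $T=T_1$.

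The main technical obstacle is reproducing the sharp time-dependence $T(1-T/(2T_0))$ in \eqref{Thm-3-1000}; this requires the explicit evaluation of $\int_0^T A^{-1/2}$, and is what couples the local existence threshold in part (1) to the small-data global threshold in part (2) via the limit $T\to T_0$. A secondary subtlety is that in the defocusing case the pointwise size of $D_tY$ is only controlled by $\sqrt{A(t)}$, which blows up as $t\to T_0$; the weighted norm $X'$ is nevertheless finite, and is exactly what the continuation argument needs.
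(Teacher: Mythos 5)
Your proposal reproduces the paper's own proof almost step for step: the solution as the fixed point of $\Psi$ from \eqref{Def-Psi} in the ball $X'(T,C_0D')$; the weighted multiplier $A^{-1}D_tY$, whose resulting identity is exactly the content of Lemma \ref{Lem-12} (you have a harmless sign slip in writing the right-hand side as $-F\cdot D_tY/A$ for the equation $D_t^2Y+AY=F$, but your application in part (5) carries the correct signs of $e^0$ and $e^1$ from Lemma \ref{Lem-12}(2)); the explicit evaluation $\int_0^T A^{-1/2}\,dt=A(0)^{-1/2}\,T\left(1-\frac{T}{2T_0}\right)$, which is correct and is how the factor in \eqref{Thm-3-1000} arises — the paper gets the same expression via $A\sim q_0^2$ and $|q_0|^{-1}=\frac{a_0}{2|a_1|}\left(1-\frac{t}{T_0}\right)$ from Lemmas \ref{Lem-8} and \ref{Lem-9}; the passage to \eqref{Thm-3-2000} (the paper bounds $T(1-T/(2T_0))\le T_0$ uniformly rather than setting $T=T_0$, a factor of $2$ absorbed into $C$); and for (5) the conserved weighted energy with $\lambda\ge0$, $D_tA>0$ plus a continuation argument, stated at the same level of detail as the paper's own proof (which likewise only asserts that the local existence time from \eqref{Thm-3-1000}, with $a_0$, $a_1$, $D'$ replaced by $a(t)$, $D_ta(t)$, $D'(t)$, can be taken uniformly). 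Your closed form $A(t)=k(1-k)/\left(T_0(1-t/T_0)\right)^2$ with $k=2/(n(1+\sigma))$ checks out against Lemma \ref{Lem-9}(1).

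The one genuine gap is part (4). You assert that the contraction bound "yields \dots (4)", but the Banach fixed point argument gives uniqueness only within the ball $X'(T,C_0D')$: a second solution $Z\in C^1([0,T))\cap X'(T)$ need not satisfy $\|Z\|_{X'(T)}\le C_0D'$, so your argument does not reach it. The paper closes this with a separate connectedness/continuation argument (its proof of (3) in Theorem \ref{Thm-1}, invoked for the present case): set $T_\ast:=\sup\{t\in[0,T):\,Y(t)=Z(t)\}$ and, assuming $T_\ast<T$, run the difference estimate on a short interval $J=(T_\ast,T_\ast+\varepsilon)$, where the data of $Y-Z$ vanish at $T_\ast$ by continuity and the Lipschitz factor $C|\lambda|\,\|A^{-1/2}\|_{L^1(J)}\max\{\|Y\|_{X'(J)},\|Z\|_{X'(J)}\}^{p-1}$ is $<1$ for $\varepsilon$ small, since $\|Y\|_{X'(T)}$ and $\|Z\|_{X'(T)}$ are finite; this forces $Y=Z$ on $J$, contradicting the maximality of $T_\ast$. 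You need to supply this (or an equivalent Gronwall-type) step; without it, (4) as stated is unproved even though everything else in your outline is sound.
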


In (2) in Theorem \ref{Thm-3}, 
we obtain global solutions  
when the space is contracting (i.e., $a_1<0$) 
under the condition \eqref{Thm-3-2000}.
We note that global solutions are obtained even for large data 
(i.e., $D'$ is large) 
if the spatial contraction is sufficiently large (i.e., $|a_1|$ is sufficiently large).

\vspace{10pt}

In the above theorems, 
the case $\sigma\ge -1+2/n$ has been considered when $a_1\neq0$, 
which is required for our energy estimates 
(see Lemmas \ref{Lem-10} and \ref{Lem-12}, below).
When $a_1=0$, 
we note that $a(\cdot)=a_0$ in \eqref{a} is independent of $\sigma$.
The case $\sigma=-1$ in \eqref{a} corresponds to the de Sitter spacetime with the flat spatial curvature 
which is one of important models of the expanding or contracting universe.
In this case, we have the following two theorems.
The first theorem shows the exact solutions of the Cauchy problem \eqref{Cauchy-Y}.

\begin{theorem}
[Exact solutions in the de Sitter spacetime]
\label{Thm-4}
Let $n\ge1$, $H\in \br$, $a(t)=e^{Ht}$, $\lambda \in \br$, $p\in \br$, $f(Y)=\lambda |Y|^{p-1}Y$.

(1) 
Let $p=1$. 
Then the solution of \eqref{Cauchy-Y} is given by 
\beq
\label{Thm-4-1000}
Y(t)=
\begin{cases}
Y_0\cos \sqrt{\lambda-H^2} t+\frac{Y_1}{ \sqrt{\lambda-H^2} } \sin\sqrt{\lambda-H^2}t & \mbox{if}\ \lambda>H^2,
\\
Y_0+Y_1 t & \mbox{if}\ \lambda=H^2,
\\
B e^{\sqrt{H^2-\lambda} t}
+
C e^{-\sqrt{H^2-\lambda}t} & \mbox{if}\ \lambda<H^2,
\end{cases}
\eeq 
where $B$ and $C$ are constants defined by 
\beq
\label{Thm-4-2000}
B
:=
\frac{1}{2}
\left(Y_0+\frac{Y_1}{\sqrt{H^2-\lambda}}\right)
\ \ \mbox{and} \ \ 
C:=
\frac{1}{2}
\left(Y_0-\frac{Y_1}{\sqrt{H^2-\lambda}}\right).
\eeq

(2) 
Let $p\neq 1$.  
Let $Y^3=\cdots=Y^n=0$.
Put $R:=\sqrt{(Y_0^1)^2+(Y_0^2)^2}$.
Then the solution $Y$ of \eqref{Cauchy-Y} is given as follows.

(i) 
If $\lambda R^{p-1}>H^2$ 
(namely, $\lambda>0$ and $R>(H^2/\lambda)^{1/(p-1)}$), 
then 
\beq
\label{Thm-4-3000}
Y^1(t)=R\cos (\omega t+\delta),\ \ Y^2(t)=R\sin (\omega t+\delta),
\eeq
where we have put $\omega:=\sqrt{\lambda R^{p-1}-H^2}$ and $\delta$ is a number which satisfies 
$Y^1_0=R\cos \delta$, 
$Y^2_0=R\sin \delta$, 
$Y^1_1=-R\omega \sin \delta$ and $Y^2_1=R\omega \cos \delta$.

(ii) If $\lambda R^{p-1}=H^2$ 
(namely, $\lambda=H=0$, or $\lambda>0$ and $R=\left(H^2/{\lambda}\right)^{1/(p-1)}$), 
then 
\[
Y^1(t)=Y^1_0,\ \ Y^2(t)=Y^2_0 
\]
and $Y^1_1=Y^2_1=0$.
Namely, $Y^1$ and $Y^2$ are constants.

(iii) 
If $\lambda R^{p-1}<H^2$ 
(namely, $\lambda<0$ and $H\in \br$, 
or 
$\lambda=0$ and $H\neq 0$, 
or 
$\lambda>0$ and $R<(H^2/\lambda)^{1/(p-1)}$), 
then the solution $Y^1=Y^2=0$ is only allowed.
\end{theorem}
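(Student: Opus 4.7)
\vspace{6pt}
\noindent\textbf{Proof proposal for Theorem 4.}

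The plan is to first reduce the equation to a constant-coefficient form using the de Sitter scale-function, and then solve explicitly in each regime by a direct ansatz.

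\vspace{4pt}
\noindent\emph{Step 1: Reduction of $A(t)$.} With $a(t)=e^{Ht}$, one has $a^2(t)=e^{2Ht}$, hence $q_0(t)=D_ta^2/a^2=2H$ is constant, so $D_tq_0\equiv 0$ and $A(t)=-\tfrac14(2H)^2=-H^2$. The Cauchy problem \eqref{Cauchy-Y} therefore takes the autonomous form
\[
D_t^2 Y - H^2 Y + \lambda|Y|^{p-1}Y = 0,\qquad Y(0)=Y_0,\ \ D_tY(0)=Y_1.
\]

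\vspace{4pt}
\noindent\emph{Step 2: The linear case $p=1$.} The equation becomes $D_t^2Y+(\lambda-H^2)Y=0$, a linear constant-coefficient ODE that decouples componentwise. I would simply write down the general solution in the three regimes determined by the sign of $\lambda-H^2$: trigonometric when $\lambda>H^2$, affine when $\lambda=H^2$, and a pair of real exponentials when $\lambda<H^2$. The formulas \eqref{Thm-4-1000} then follow by matching $Y(0)=Y_0$ and $D_tY(0)=Y_1$; in the last case this linear system in $(B,C)$ is exactly inverted to give \eqref{Thm-4-2000}.

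\vspace{4pt}
\noindent\emph{Step 3: The nonlinear case $p\ne 1$, constant-modulus ansatz.} Under the standing assumption $Y^3=\cdots=Y^n=0$, I would seek solutions with $|Y(t)|\equiv R$ constant. On such solutions the nonlinearity linearises to $\lambda R^{p-1}Y$, so the equation reduces to
\[
D_t^2 Y = (H^2-\lambda R^{p-1})Y,
\]
again a two-component, constant-coefficient linear system. The sign of $\lambda R^{p-1}-H^2$ selects the three subcases (i)--(iii).

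In subcase (i), $\omega^2:=\lambda R^{p-1}-H^2>0$, each component is a sinusoid of frequency $\omega$, and to maintain $|Y(t)|\equiv R$ I take the uniform circular motion \eqref{Thm-4-3000}; a direct substitution verifies both the ODE and $|Y|^2=R^2$. Matching $Y(0)=(R\cos\delta,R\sin\delta)=Y_0$ determines $\delta$, and differentiation then forces $Y_1=(-R\omega\sin\delta,R\omega\cos\delta)$ as stated. In subcase (ii), $\lambda R^{p-1}=H^2$ gives $D_t^2Y=0$, so $Y(t)=Y_0+Y_1t$; constancy of $|Y|$ together with $Y_0\ne 0$ forces $Y_1=0$, yielding the stationary solution.

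\vspace{4pt}
\noindent\emph{Step 4: The obstructive subcase (iii).} This is the step I expect to be the main obstacle, because one must show that constant-modulus solutions other than $Y\equiv 0$ are \emph{forbidden}. Setting $\mu:=\sqrt{H^2-\lambda R^{p-1}}>0$, each component of $Y$ has the form $Y^j(t)=\alpha_j e^{\mu t}+\beta_j e^{-\mu t}$, so
\[
|Y(t)|^2 = \Big(\textstyle\sum_j\alpha_j^2\Big)e^{2\mu t}+2\sum_j\alpha_j\beta_j+\Big(\textstyle\sum_j\beta_j^2\Big)e^{-2\mu t}.
\]
Independence of $e^{2\mu t},1,e^{-2\mu t}$ and constancy of $|Y|^2$ force $\sum_j\alpha_j^2=\sum_j\beta_j^2=0$, and since $\alpha_j,\beta_j\in\br$ this gives $\alpha_j=\beta_j=0$, i.e.\ $Y\equiv 0$. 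This rules out any nontrivial planar solution with constant norm in the regime $\lambda R^{p-1}<H^2$, and the three listed parameter ranges are exactly the ways this inequality can be realised. Combining Steps 1--4 completes the proof.
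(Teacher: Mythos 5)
Your proposal is correct and follows essentially the same route as the paper's proof: compute $A=-H^2$ from \eqref{A}, solve the linear case $p=1$ componentwise, and reduce the case $p\neq1$ via the constant-modulus assumption $|Y|\equiv R$ to the constant-coefficient system $D_t^2Y+(\lambda R^{p-1}-H^2)Y=0$, split into the three sign regimes. The only deviation is in subcase (iii), where you rule out nontrivial solutions by linear independence of $e^{2\mu t}$, $1$, $e^{-2\mu t}$, while the paper argues asymptotically (first $B^1=B^2=0$ since otherwise $|Y|\to\infty$ as $t\to\infty$, then $R=0$ since the remaining part decays to $0$); your variant is marginally more robust in that it only needs constancy of $|Y|$ on a finite interval, but it is the same idea in substance.
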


In Theorem \ref{Thm-4}, $\lambda=H^2$ and $\lambda R^{p-1}=H^2$ are thresholds for the results in 
(1) and (2), respectively.
Especially, we have the new solutions for the positive $\lambda>0$ with $0<\lambda<H^2$ in (1) and 
$0<\lambda<H^2/R^{p-1}$ for (iii) in (2) when $H\neq0$.

\begin{remark}
[Rotational motion by the gravity] 
\label{Thm-4-(3)-(4)}
Let $M>0$ be a mass constant. 
Let $G$ be the Newton gravitational constant.
The first equation in the Cauchy problem \eqref{Cauchy-Y} 
for 
$\sigma=-1$, $f(Y)=\lambda |Y|^{p-1}Y$, 
$p=-2$ and $\lambda =GM$ is written as 
\beq
\label{Eq-Rotation}
D_t^2Y(t)-H^2Y(t)+\frac{GM}{|Y(t)|^2}\cdot \frac{Y(t)}{|Y(t)|}=0
\eeq
for $0\le t<\infty$, 
which is the Newton equation for a star which goes around another star with the mass $M$ 
in the de Sitter spacetime.
When we consider a star moving on the plane by $(Y^1,Y^2)$ coordinates 
(i.e., $Y^3=\cdots=Y^n=0$), 
the result (i) in (2) in Theorem \ref{Thm-4} gives the solution 
\beq
\label{Eq-Rotation-Sol}
Y^1(t)=R\cos \left(\omega t+\delta\right), 
\ \ 
Y^2(t)=R\sin \left(\omega t+\delta\right)
\eeq
for some constant $\delta$ 
if $H$ satisfies $H^2<GM/R^3$, 
where $\omega=\sqrt{GM/R^3-H^2}$.
We note that the original variable $X$ is given by $X(t)=Y(t)/a(t)=Y(t)/e^{Ht}$ 
in \eqref{X-Y}, below. 
The solution \eqref{Eq-Rotation-Sol} shows 
that the angular velocity $\omega$ of the star in the de Sitter spacetime ($H\neq0$) 
is smaller than the angular velocity $\sqrt{GM/R^3}$ in the Minkowski spacetime ($H=0$).
Let us calculate its difference by an example of the sun and the earth. 
We use the values 
\begin{eqnarray*}
&&T=\dot{T}\times 10^7 \ {\rm [s]},\ \ \dot{T}=3.1556925,\ \  
M=\dot{M}\times 10^{30} \ {\rm [kg]}, \ \ \dot{M}=1.9884,\ \ 
\\
&&R=\dot{R}\times 10^8 \ {\rm [km]}, \ \ \dot{R}=1.496,\ \ 
G=\dot{G}\times 10^{-20} \ {\rm [km^3 \ kg^{-1} \ s^{-2}] },\ \ \dot{G}=6.67408,\ \ 
\\
&& H=\dot{H}\times 10\ {\rm [km \ s^{-1}\ Mpc^{-1}] },\ \ \dot{H}=7,
\end{eqnarray*}
where ${\rm s}$ denotes the second, and {\rm Mpc} denotes Mega parsec 
which is ${\rm Mpc}=\dot{P}\times 10^{19} $ {\rm [km]} with $\dot{P}=3.085677581$.
When $H=0$, we must have 
$\omega T \left(=\sqrt{GM R^{-3}}\cdot T\right)=2\pi$,  
while we have $\sqrt{GM R^{-3}}\cdot T=2\times 3.141004674$ from the above values.
Now, we calculate the angular velocity in the de Sitter spacetime as 
\begin{eqnarray*}
\sqrt{\frac{GM}{R^3}-H^2}
&=&
\sqrt{\frac{GM}{R^3}}
\left(1-\frac{\dot{H}^2 \dot{R}^3}{\dot{G}\dot{M}\dot{P}^2}\times 10^{-22}\right)^{1/2}
\\
&\doteqdot&
\sqrt{\frac{GM}{R^3}}
\left(1-\frac{1}{2}\cdot \frac{\dot{H}^2 \dot{R}^3}{\dot{G}\dot{M}\dot{P}^2}\times 10^{-22}\right),
\end{eqnarray*}
where 
\[
\frac{\dot{H}^2 \dot{R}^3}{\dot{G}\dot{M}\dot{P}^2}\doteqdot 1.298358447.
\]
\end{remark}

\vspace{10pt}

Secondly, we consider the case $\sigma=-1$, $n=1$, $Y_0\in \br$, $Y_1\in \br$, 
$H\ge0$, $\lambda>0$, $1<p<\infty$,  
$a(t)=e^{Ht}$, $f(Y)=\lambda|Y|^p$ in \eqref{Cauchy-Y}. 
Namely, 
\beq
\label{Cauchy-Y-Variant}
\left\{
\begin{array}{l}
\displaystyle 
D_t^2 Y(t)-H^2Y(t)+\lambda |Y(t)|^p=0\ \ \mbox{for}\ t\ge0,
\\
Y(0)=Y_0,\ \ D_t Y(0)=Y_1.
\end{array}
\right.
\eeq
We say that $Y$ is the global weak solution if $Y$ satisfies 
\beq
\label{Def-Weak}
-Y_1\phi(0)+Y_0D_t\phi(0)
+\int_0^\infty Y(t)D_t^2\phi(t)-H^2Y(t) \phi(t)
+\lambda|Y(t)|^p \phi(t) dt=0
\eeq
for any $\phi\in C^2_0([0,\infty))$.
The definition of the global weak solution follows from the multiplication of $\phi$ to the differential equation in \eqref{Cauchy-Y-Variant}.
Namely,
\[
0=
(D_t^2 Y-H^2Y+\lambda |Y|^p)\phi
=
D_t(D_tY\phi-YD_t\phi)+YD_t^2\phi-H^2 Y\phi+\lambda|Y|^p\phi,
\]
which yields \eqref{Def-Weak} by the integration.
The next theorem shows the nonexistence of the nontrivial global weak solution 
of \eqref{Cauchy-Y-Variant}, 
which implies 
that nontrivial local solutions of \eqref{Cauchy-Y-Variant} must blow up in finite time.

\begin{theorem}
[Nonexistence of nontrivial global weak solution]
\label{Thm-5}
Let $n=1$, $H\ge0$, $a(t)=e^{Ht}$,  
$\lambda>0$, $1<p<\infty$.
If $HY_0+Y_1\le 0$, then the global weak solution $Y$ of \eqref{Cauchy-Y-Variant} 
must satisfy $Y=0$.
\end{theorem}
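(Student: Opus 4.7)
\textit{Proof plan.} The strategy is the test-function (nonlinear capacity) method of Mitidieri--Pohozaev type, exploiting the factorization $D_t^2-H^2=(D_t-H)(D_t+H)$ together with the sign conditions $\lambda>0$ and $HY_0+Y_1\le 0$. I plan to test \eqref{Def-Weak} against functions of the form $\phi_T(t):=e^{-Ht}\psi_T(t)$ with $\psi_T(t):=\eta(t/T)^m$, where $\eta\in C^\infty([0,\infty))$ is a fixed cutoff with $\eta\equiv 1$ on $[0,1]$, $\eta\equiv 0$ on $[2,\infty)$, $\eta$ nonincreasing, and $m\ge 2p/(p-1)$ is chosen large enough so that $\psi_T^{-1/(p-1)}$ absorbs the negative powers produced by differentiating the cutoff. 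The $e^{-Ht}$ prefactor is the natural weight making $-H^2 Y$ act as a dissipative term under the adjoint operator.

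A direct computation yields $(D_t^2-H^2)\phi_T=e^{-Ht}(\psi_T''-2H\psi_T')$ because the $H^2\psi_T$ contributions cancel, while $\phi_T(0)=1$ and $D_t\phi_T(0)=-H$ since $\eta\equiv 1$ near $0$. Substituting $\phi_T$ into \eqref{Def-Weak} reduces the weak identity to
\[
\int_0^\infty \lambda|Y|^p e^{-Ht}\psi_T\,dt=(HY_0+Y_1)-\int_0^\infty Y\,e^{-Ht}(\psi_T''-2H\psi_T')\,dt.
\]
The hypothesis $HY_0+Y_1\le 0$ eliminates the boundary term. Applying H\"older's inequality with exponents $p$ and $p/(p-1)$, via the splitting $|Y|e^{-Ht}|\psi_T''-2H\psi_T'|=[|Y|\psi_T^{1/p}e^{-Ht/p}]\cdot[\psi_T^{-1/p}|\psi_T''-2H\psi_T'|e^{-Ht(p-1)/p}]$, produces $\lambda J(T)\le J(T)^{1/p}K(T)$, where $J(T):=\int_0^\infty|Y|^p e^{-Ht}\psi_T\,dt$ and
\[
K(T):=\left(\int_T^{2T} e^{-Ht}\psi_T^{-1/(p-1)}|\psi_T''-2H\psi_T'|^{p/(p-1)}\,dt\right)^{(p-1)/p}.
\]
The localization to $[T,2T]$ comes from $\psi_T',\psi_T''$ vanishing on $[0,T]$.

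It remains to show $K(T)\to 0$ as $T\to\infty$. Rescaling by $s=t/T$, the derivative terms contribute powers $T^{-2p/(p-1)}$ (from $\psi_T''$) and $H^{p/(p-1)}T^{-p/(p-1)}$ (from $H\psi_T'$); the choice $m\ge 2p/(p-1)$ keeps the exponent of $\eta$ nonnegative in $\psi_T^{-1/(p-1)}|\psi_T''|^{p/(p-1)}$, avoiding any singularity at $\{\eta=0\}$. Integrating $e^{-Ht}$ over $[T,2T]$ yields a factor $T$ when $H=0$ and a factor bounded by $e^{-HT}/H$ when $H>0$; in both cases $K(T)\to 0$. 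From $\lambda J(T)^{(p-1)/p}\le K(T)\to 0$ and the fact that $\psi_T\nearrow 1$ pointwise, monotone convergence forces $\int_0^\infty|Y|^p e^{-Ht}\,dt=0$, hence $Y\equiv 0$.

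The only delicate point, and the main obstacle, is calibrating the cutoff exponent $m$: both $\psi_T^{-1/(p-1)}|\psi_T'|^{p/(p-1)}$ and $\psi_T^{-1/(p-1)}|\psi_T''|^{p/(p-1)}$ must remain bounded on $\{\eta=0\}$, which forces $m(p-1)\ge 2p$. With that choice fixed the remainder is routine; one should also verify that $\phi_T$ belongs to $C^2_0([0,\infty))$ as required by the definition of weak solution, which holds since $\phi_T$ is smooth and supported in $[0,2T]$.
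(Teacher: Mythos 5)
Your proof is correct and is essentially the paper's own argument: the paper first changes variables to $X=e^{-Ht}Y$ (so the equation becomes $D_t^2X+2HD_tX+\lambda e^{(p-1)Ht}|X|^p=0$) and then tests with a rescaled cutoff $\eta_R^{p'}$, which is exactly your computation with the weight $e^{-Ht}$ absorbed into the test function, your cancellation $(D_t^2-H^2)(e^{-Ht}\psi)=e^{-Ht}(\psi''-2H\psi')$ being the adjoint form of that damping structure. The only (immaterial) difference is the cutoff calibration: you take $\eta^m$ with $m\ge 2p/(p-1)$ to keep $\psi_T^{-1/(p-1)}|\psi_T''|^{p/(p-1)}$ bounded, while the paper uses the minimal exponent $\eta_R^{p'}$ and instead imposes $|D_t\eta|^2/\eta\lesssim 1$ on the transition region; both yield the same bounds $|J|\lesssim J_\ast I^{1/p}/R^2$, $|L|\lesssim HJ_\ast I^{1/p}/R$ and the same conclusion.
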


On the condition $HY_0+Y_1\le 0$ in Theorem \ref{Thm-5}, 
we note that $Y_1$ must be non-positive for the Minkowski spacetime (i.e., $H=0$), 
while $Y_1$ can be positive if $Y_0<0$ and the spatial expansion is sufficiently large 
(i.e., $H$ is sufficiently large).

\vspace{10pt}

We use the following notations.
We denote the Lebesgue space by $L^q(I)$ for an interval $I\subset \br$ 
and $1\le q\le \infty$ with the norm 
\[
\|Y\|_{L^q(I)}:=
\begin{cases}
\left\{
\int_I |Y(t)|^q dt
\right\}^{1/q} & \mbox{if}\ \ 1\le q<\infty,
\\
{\mbox{ess.}\sup}_{t\in I} |Y(t)| 
& \mbox{if}\ \ q=\infty.
\end{cases}
\]
We denote the inequality $A\le CB$ for some constant $C>0$ which is not essential for the argument by $A\lesssim B$.
Put $\nabla:=(\partial_1,\cdots,\partial_n)$ and 
$\Delta:=\sum_{j=1}^n \partial_j^2$.

This paper is organized as follows.
We show the derivation of the differential equation in \eqref{Cauchy-Y} 
based on the Ehrenfest theorem for the Schr\"odinger equation in the homogeneous and isotropic spacetime in Section \ref{Section-Derivation}. 
We also show some estimates for the solution of the linear equation, 
and the energy estimates for the inhomogeneous equation.
We prove the above theorems in Sections from \ref{Section-Thm-1} to \ref{Section-Thm-5}.

\newsection{Derivation of the equation}
\label{Section-Derivation}
In this section, we show the derivation of the first equation in \eqref{Cauchy-Y}.
We derive it from the Schr\"odinger equation 
by the Ehrenfest theorem, 
where the Schr\"odinger equation is derived from the Klein-Gordon equation by the nonrelativistic limit.

We use the following convention.
The Greek letters $\alpha, \beta,\gamma,\cdots$ run from $0$ to $n$, 
and the Latin letters $j,k,\ell, \cdots$ run from $1$ to $n$.
We use the Einstein rule for the sum of indices, 
namely, the sum is taken for same upper and lower repeated indices, 
for example, 
$\partial_j\partial^j:=\sum_{j=1}^n \partial_j\partial^j$, 
${T^\alpha}_\alpha:=\sum_{\alpha=0}^n {T^\alpha}_\alpha$ 
and 
${T^j}_j:=\sum_{j=1}^n {T^j}_j$ for any tensor ${T^\alpha}_\beta$.
We put 
$x:=(x^0,x^1,\cdots, x^n)\in \br^{1+n}$, $t:=x^0$, $x':=(x^1,\cdots, x^n)$.

Firstly, let us consider the Klein-Gordon equation 
\beq
\label{Der-1000}
\frac{1}{\sqrt{-g}} \partial_\alpha
\left(\sqrt{-g} g^{\alpha\beta}\partial_\beta \phi\right)
-\frac{m^2c^2}{\hbar^2} \phi
-\frac{2m}{\hbar^2}U(x')\phi
=0
\eeq
in the spacetime with the metric 
$(g_{\alpha\beta}(t))=\diag(-c^2,a(t)^2,\cdots, a(t)^2)$, 
where $g$ is the determinant of the matrix $(g_{\alpha\beta})$, 
the matrix $(g^{\alpha\beta})$ denotes the inverse matrix of $(g_{\alpha\beta})$, 
$x=(x^0,x^1,\cdots,x^n)$ with $x^0=t$ 
and $U=U(x')$ for $x'=(x^1,\cdots,x^n)$ is a real-valued potential.
Put 
\[
w(t):=b_0\left(\frac{a_0}{a(t)}\right)^{n/2},\ \ 
b(t):=w(t)e^{-imc^2 t/\hbar}, \ \ 
u(x):=\frac{\phi(x)}{b(t)}
\]
for the constant $a_0>0$ in \eqref{a}, and any constant $b_0(=b(0))\in \br$.

We obtain the Schr\"odinger equation by the nonrelativistic limit as follows.

\begin{lemma}
\label{Lem-4}
The Schr\"odinger equation 
\beq
\label{Der-3000}
i\frac{2m}{\hbar} \partial_tu+\frac{1}{a^2}\Delta u-\frac{2m}{\hbar^2}Uu=0
\eeq
is obtained from \eqref{Der-1000} by the nonrelativistic limit (i.e., $c \to \infty$).
The equation \eqref{Der-3000} is rewritten as 
\beq
\label{Der-4000}
i\hbar \partial_tu
-\frac{1}{2m}p^jp_j u-U u=0,
\eeq
where 
$x_\alpha:=g_{\alpha\beta}x^\beta$, 
$p_\alpha:=-i\hbar\partial_\alpha$, 
$p^\alpha:=g^{\alpha\beta}p_\beta$ 
for 
$0\le \alpha\le n$.
\end{lemma}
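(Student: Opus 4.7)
The plan is to expand the covariant d'Alembertian in the FLRW coordinates, insert the ansatz $\phi(x)=w(t)e^{-imc^2 t/\hbar}u(x)$, and then organize the resulting terms by powers of $c$, so that the nonrelativistic limit $c\to\infty$ amounts to checking which contributions survive and which ones cancel.

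First I would compute the geometric data. From $(g_{\alpha\beta})=\operatorname{diag}(-c^2,a^2,\ldots,a^2)$ one reads off $\sqrt{-g}=c\,a^n$ and $(g^{\alpha\beta})=\operatorname{diag}(-1/c^2,1/a^2,\ldots,1/a^2)$, so the wave operator splits as
$$
\frac{1}{\sqrt{-g}}\partial_\alpha\!\left(\sqrt{-g}\,g^{\alpha\beta}\partial_\beta\phi\right)
=-\frac{1}{c^2 a^n}\partial_t\!\left(a^n\partial_t\phi\right)+\frac{1}{a^2}\Delta\phi.
$$
Next I would plug in the ansatz and differentiate, grouping powers of $c$ in $\partial_t\phi$ and $\partial_t^2\phi$, then multiply the Klein--Gordon equation by $e^{+imc^2 t/\hbar}$ to strip off the oscillation. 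The term $-m^2c^4 wu/\hbar^2$ hidden inside $\partial_t^2\phi$ produces, after the prefactor $-1/c^2$, exactly $+\frac{m^2c^2}{\hbar^2}wu$, which cancels the mass contribution $-\frac{m^2c^2}{\hbar^2}\phi$ identically. This is the first of two key cancellations.

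The crux is the $O(1)$ balance. Collecting the $c^2$-coefficients of $\partial_t\phi$ and $\partial_t^2\phi$, multiplied by $-na'/(c^2a)$ and $-1/c^2$ respectively, and adding the potential and spatial pieces, the equation becomes
$$
\frac{2im}{\hbar}\left(\frac{na'w}{2a}+w'\right)u+\frac{2imw}{\hbar}\,\partial_t u
+\frac{w}{a^2}\Delta u-\frac{2mUw}{\hbar^2}u+O(1/c^2)=0.
$$
The choice $w(t)=b_0(a_0/a(t))^{n/2}$ is designed precisely so that $a^{n/2}w$ is constant in $t$; differentiating gives $w'+na'w/(2a)=0$, which annihilates the bracketed coefficient of $u$. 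Dividing by $w$ and letting $c\to\infty$ kills the $O(1/c^2)$ remainder and leaves \eqref{Der-3000}.

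The Hamiltonian form \eqref{Der-4000} is then a direct algebraic identity: with $p_j=-i\hbar\partial_j$ and $p^j=g^{jk}p_k=p_j/a^2$, Einstein summation gives $p^jp_j=-\hbar^2\Delta/a^2$, so $-\frac{1}{2m}p^jp_j u=\frac{\hbar^2}{2ma^2}\Delta u$, and multiplying \eqref{Der-3000} by $\hbar^2/(2m)$ produces \eqref{Der-4000}. The main obstacle throughout is the careful bookkeeping of powers of $c$: two independent cancellations must be engineered — the Klein--Gordon mass term against the $c^4$-piece of $\partial_t^2\phi$, and the surviving $O(1)$ drift coefficient of $u$ against the combination $w'+na'w/(2a)$ — and it is the second of these that dictates the explicit form of the gauge factor $w$.
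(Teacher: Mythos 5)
Your proposal is correct and takes essentially the same route as the paper: both expand the covariant wave operator to $-\frac{1}{c^2}\left(\frac{n\partial_t a}{a}\partial_t\phi+\partial_t^2\phi\right)+\frac{1}{a^2}\Delta\phi$, substitute $\phi=b\,u$ with $b=w\,e^{-imc^2t/\hbar}$, and exploit exactly the two cancellations you isolate --- the mass term against the $c^4$ piece of $\partial_t^2\phi$, and the $O(c^2)$ drift coefficient via $w'+\frac{n\partial_t a}{2a}w=0$ --- before dropping the $O(c^{-2})$ remainder (the paper's residual term $-c^{-2}(\partial_t^2u+u\,\three)$) and obtaining \eqref{Der-4000} from $p^jp_j=-\hbar^2a^{-2}\Delta$. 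No gaps.
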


\begin{proof}
Since we have 
\[
\frac{1}{\sqrt{-g}} \partial_\alpha
\left(\sqrt{-g} g^{\alpha\beta}\partial_\beta \phi\right)
=
-\frac{1}{c^2}\left(
\frac{n\partial_t a}{a}\partial_t \phi+\partial_t^2\phi
\right)
+\frac{1}{a^2}\Delta \phi,
\]
the equation \eqref{Der-1000} is rewritten as 
\beq
\label{Der-2000}
-\frac{1}{c^2}\cdot I+\two=0,
\eeq
where we have put 
\[
I:=
\frac{n\partial_t a}{a}\partial_t \phi+\partial_t^2\phi
+\frac{m^2c^4}{\hbar^2}\phi,
\ \ 
\two:=
\frac{1}{a^2}\Delta \phi-\frac{2m}{\hbar^2} U(x')\phi.
\]
Put $\alpha_\ast:=- i{m}/{\hbar}$.
Since we have 
\[
I=b(\partial_t^2u+2\alpha_\ast c^2\partial_tu+u\three), 
\ \ 
\two=b\left(\frac{1}{a^2}\Delta u-\frac{2m}{\hbar^2} U u\right),
\]
where we have put 
\[
\three:=-\left(\frac{n\partial_t a}{2a}\right)^2-\frac{n}{2}\partial_t
\left(\frac{\partial_t a}{a}\right),
\] 
the equation \eqref{Der-2000} is rewritten as 
\[
0=
-\frac{1}{c^2} (\partial_t^2u+u\three)-2\alpha_\ast \partial_t u
+\frac{1}{a^2}\Delta u-\frac{2m}{\hbar^2} U u.
\]
By the nonrelativistic limit $(c\to\infty)$ of this equation, 
we obtain the Schr\"odinger equation \eqref{Der-3000}, 
which is rewritten as \eqref{Der-4000} 
by $p^j p_j=-\hbar^2 a^{-2}\Delta$.
\end{proof}

For the solution $u$ of the Schr\"odinger equation,
we define the expectation values for $x^j$, $x_j$, $p^j$ and any function $h=h(x')$ 
for $x'\in \brn$ by 
\begin{eqnarray*}
&&
\anglex{x^j}(t):=\int_\brn \overline{u(t,x')} x^j u(t,x') dx',
\ \ 
\anglex{x_j}(t):=\int_\brn \overline{u(t,x')} x_j u(t,x') dx',
\\ 
&&
\anglex{p^j}(t):=\int_\brn \overline{u(t,x')} p^j u(t,x') dx',
\ \  
\anglex{h}(t):=\int_\brn \overline{u(t,x')} h(x') u(t,x') dx'.
\end{eqnarray*}

We have the equation of motion as follows.

\begin{lemma}
\label{Lem-5}
Let $u$ be the solution of \eqref{Der-3000}. 
The equations  
\[
(1)\ \ m D_t \anglex{x^j}(t)=\anglex{p^j}(t),
\ \ 
(2)\ \ m D_t^2 \anglex{x^j}(t)=-\frac{D_t a^2(t)}{a^2(t)} \anglex{p^j}(t)-\anglex{\partial^j U}(t)
\]
hold for $t\ge0$ and $1\le j\le n$. 
Especially, the equation of motion 
\beq
\label{Lem-5-1000}
m D_t^2 \anglex{x^j}(t)+m\frac{D_t a^2(t)}{a^2(t)} D_t\anglex{x^j}(t)+\anglex{\partial^jU}(t)=0
\eeq
holds for $0\le t<T_1$.
\end{lemma}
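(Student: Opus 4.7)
The plan is to apply the standard Ehrenfest-theorem derivation to \eqref{Der-3000}: differentiate the expectation values under the integral sign, eliminate the time derivative of $u$ using the Schr\"odinger equation, and re-express the result through integration by parts in the spatial variables.

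For (1), I would start from $\anglex{x^j}(t)=\int_\brn \bar u\,x^ju\,dx'$ and let $D_t$ fall on $u$ and $\bar u$. Writing the Schr\"odinger equation \eqref{Der-3000} as $\partial_tu=\tfrac{i\hbar}{2ma^2}\Delta u-\tfrac{i}{\hbar}Uu$ and inserting this (together with its complex conjugate) into the resulting integral, the potential contributions cancel since $U$ is real-valued. What remains is a Laplacian commutator, which I would handle via the identity $\Delta(x^ju)=2\partial_ju+x^j\Delta u$ together with two integrations by parts; the $x^j\Delta u$ and $-x^j\Delta u$ pieces cancel and the integral collapses to $(-i\hbar/ma^2)\int_\brn \bar u\,\partial_ju\,dx'$. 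Recognising $p^j=g^{jk}p_k=a^{-2}(-i\hbar\partial_j)$, the right-hand side is exactly $\anglex{p^j}/m$, which proves (1).

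For (2), I would differentiate $\anglex{p^j}(t)=(-i\hbar/a^2)\int_\brn \bar u\,\partial_ju\,dx'$. The $t$-derivative of the prefactor $1/a^2$ produces the drag term $-(D_ta^2/a^2)\anglex{p^j}$ directly. In the residual integral $\int_\brn (\overline{\partial_tu}\,\partial_ju+\bar u\,\partial_j\partial_tu)\,dx'$ I would again substitute from \eqref{Der-3000}. This time the Laplacian contributions cancel: a triple integration by parts shows $\int_\brn (\Delta\bar u)\,\partial_ju\,dx'=\int_\brn \bar u\,\partial_j\Delta u\,dx'$. The potential contributions however do not cancel; the two $U\bar u\,\partial_j u$ pieces annihilate but the extra $\bar u(\partial_jU)u$ term survives and gives $-(i/\hbar)\anglex{\partial_jU}$. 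Multiplying by the prefactor $-i\hbar/a^2$ converts this to $-(1/a^2)\anglex{\partial_jU}=-\anglex{\partial^jU}$, using $\partial^jU=g^{jk}\partial_kU=a^{-2}\partial_jU$. Combining with the drag term yields (2); substituting $\anglex{p^j}=mD_t\anglex{x^j}$ from (1) then rewrites it as the equation of motion \eqref{Lem-5-1000}.

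The main obstacle is technical rather than conceptual: to make sense of all four expectation values and to discard every boundary term at infinity during the several integrations by parts, one has to assume that $u(t,\cdot)$ decays sufficiently rapidly (together with $x^ju$ and enough of its spatial derivatives), for example by restricting to Schwartz-class solutions or by imposing analogous weighted-$L^2$ hypotheses. Once that regularity is granted, the computation is essentially algebraic, and the only care required is consistent index book-keeping — in particular, the identifications $p^j=a^{-2}p_j$ and $\partial^j=a^{-2}\partial_j$ — from which the three identities follow.
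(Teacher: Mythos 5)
Your proposal is correct and follows essentially the same route as the paper: both differentiate $\anglex{x^j}$ and $\anglex{p^j}$ under the integral, eliminate $\partial_t u$ via \eqref{Der-3000}, and integrate by parts so that the real potential cancels in (1) but survives as $-\anglex{\partial^j U}$ in (2), with the drag term $-(D_ta^2/a^2)\anglex{p^j}$ arising exactly as you say from the $t$-derivative of the $a^{-2}$ factor in $p^j=-i\hbar a^{-2}\partial_j$. The only difference is bookkeeping (the paper tracks imaginary and real parts of $\overline{u}\,x^j i\hbar\partial_t u$, $\overline{u}\,x^j\Delta u$ and $\partial_j\overline{u}\cdot i\hbar\partial_t u$ instead of substituting $\partial_t u$ and its conjugate directly), and your remark about the decay hypotheses needed to discard boundary terms is consistent with what the paper implicitly assumes when invoking the divergence theorem.
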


\begin{proof}
(1) 
Put $I^j:=\overline{u} x^j i\hbar \partial_t u$ and $\two^j:=\overline{u}x^j \Delta u$.
By the equation \eqref{Der-3000}, we have 
\[
\imaginary I^j = -\frac{\hbar^2}{2m a^2}\, \imaginary \two^j.
\]

Since we have  $\nabla(\overline{u}u)=2\real (\overline{u}\nabla u)$ 
and $\int_\brn \nabla (\overline{u}u)dx'$ $=0$ by the divergence theorem, 
we have 
\beq
\label{Lem-5-1500}
\int_{\brn} \overline{u} \nabla u dx'
=
i \imaginary \int_{\brn} \overline{u} \nabla u dx'.
\eeq
Since $\two^j$ is rewritten as 
\[
\two^j=
\nabla \cdot (\overline{u} x^j\nabla u)-x^j |\nabla u|^2-\overline{u}\partial_j u,
\]
we have 
$\imaginary \two^j
=
\nabla \cdot \imaginary(\overline{u} x^j\nabla u)
-
\imaginary(\overline{u}\partial_j u)$,
which yields 
\beq
\label{Lem-5-2000}
\int_\brn \imaginary \two^j dx'
=-\imaginary \int_\brn \overline{u} \partial_j u dx'
= i\int_\brn \overline{u} \partial_j u dx'
\eeq
by \eqref{Lem-5-1500}.
By the definition of $\langle x^j \rangle$ and \eqref{Lem-5-2000}, 
we have 
\begin{eqnarray*}
i\hbar D_t \anglex{x^j}
&=& 2i\int_\brn \imaginary I^j dx' \\
&=& -\frac{i\hbar^2}{m a^2} \int_\brn \imaginary \two^j dx' \\
&=& \frac{\hbar^2 }{m a^2}\int_\brn \overline{u} \partial_j u dx'.
\end{eqnarray*}
We obtain the required equation by $p^j u=a^{-2} p_j u=-i\hbar a^{-2} \partial_j u$.

(2) 
Put $\three^j:=2\real (\partial_j \overline{u}\cdot i\hbar \partial_t u)$.
By (1) and $p^j=-i\hbar a^{-2}\partial_j$, we have 
\begin{eqnarray}
m D_t^2 \anglex{x^j}
&=& 
D_t \anglex{p^j}
\nonumber\\
&=&
-\frac{D_t a^2}{a^2}\int_\brn \overline{u} p^j u dx'
+\frac{1}{a^2}\int_\brn \three^j dx'
-\frac{1}{a^2}\int_\brn \partial_j (\overline{u} i\hbar \partial_t u) dx'
\nonumber\\
&=&
-\frac{D_t a^2}{a^2}\anglex{p^j}+\frac{1}{a^2}\int_\brn \three^j dx',
\label{Lem-5-3000}
\end{eqnarray}
where we have used the definition of $\anglex{p^j}$ and the divergence theorem.
Let us estimate the term $\three^j$. 
By the equation \eqref{Der-3000}, we have 
\[
\three^j
=
-\frac{\hbar^2}{ma^2} \real (\partial_j \overline{u} \Delta u)
+2U \real(u\partial_j \overline{u})
=: \three_1^j+\three_2^j.
\] 
By simple calculations, we have 
\begin{eqnarray*}
\three_1^j
&=& 
-\nabla\cdot
\left(
\frac{\hbar^2}{ma^2}\real (\partial_j \overline{u} \nabla u) 
\right)
+\partial_j
\left(
\frac{\hbar^2}{2ma^2}|\nabla u|^2
\right),
\\
\three_2^j
&=& 
\partial_j(U|u|^2)-\partial_j U |u|^2,
\end{eqnarray*}
which yield $\int_\brn \three_1^j dx'=0$ 
and 
$\int_\brn \three_2^j dx'=-\anglex{\partial_j U}$ by the divergence theorem 
and the definition of the expectation value $\anglex{\cdot}$.
So that, we obtain the required equation from \eqref{Lem-5-3000}.
We also obtain \eqref{Lem-5-1000} inserting the result (1) into (2). 
\end{proof}

Now, we consider the potential $U$ defined by 
\[
U(x'):=\frac{m}{2}U_\ast(\anglex{x^j}\anglex{x_j}) x^k x_k
\]
for a function $U_\ast(\cdot)$.
Put 
\[
X^j:=\anglex{x^j}, \ \ X_j:=\anglex{x_j}=a^2X^j,\ \  \mbox{and} \ \ r:=\left(X^j X_j\right)^{1/2}.
\]
Put   
\beq
\label{X-Y}
Y^j(t):=a(t)X^j(t)\ \ \mbox{for}\ \ t\ge0,
\eeq
and 
\beq
\label{Y-A}
A:=
-\frac{1}{4}\left(\frac{D_t a^2}{a^2}\right)^2
-\frac{1}{2}D_t \left(\frac{D_t a^2}{a^2}\right).
\eeq

The equation \eqref{Lem-5-1000} is rewritten as follows.

\begin{lemma}
\label{Lem-6}
The following results hold for $1\le j\le n$.

(1) $\anglex{\partial^j U}=m U_\ast(r^2) X^j$.

(2) The equation \eqref{Lem-5-1000} is rewritten as 
\beq
\label{Eq-X}
D^2_t X^j+\frac{D_t a^2}{a^2} D_t X^j+U_\ast(r^2)X^j=0.
\eeq

(3) The equation \eqref{Eq-X} is rewritten as 
\beq
\label{Lem-6-1000}
D_t^2Y^j+AY^j+U_\ast(|Y|^2)Y^j=0.
\eeq
\end{lemma}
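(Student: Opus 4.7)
My plan is to prove the three parts in order, treating (1) as a direct computation, (2) as a substitution, and (3) as a change of variables that ultimately reduces to verifying the algebraic identity $A = -D_t^2 a / a$.

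For (1), I would observe that $U_\ast(\langle x^j\rangle \langle x_j\rangle)$ depends only on $t$ through the expectation values, so it is a constant when differentiating in $x'$. Since the spatial part of the metric is $g_{jk} = a^2 \delta_{jk}$, we have $x_k = a^2 \delta_{k\ell} x^\ell$ and therefore $x^\ell x_\ell = a^2 \sum_\ell (x^\ell)^2$, giving $\partial_k(x^\ell x_\ell) = 2 x_k$. Raising the index with $g^{jk} = a^{-2}\delta^{jk}$ yields $\partial^j U = m U_\ast(r^2)\, x^j$, and pulling the $t$-dependent factor outside the expectation gives $\langle \partial^j U\rangle = m U_\ast(r^2)\, X^j$.

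Part (2) is then immediate: substituting the formula from (1) into \eqref{Lem-5-1000} and dividing by $m$ yields \eqref{Eq-X}, since $\langle x^j\rangle = X^j$ by definition.

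For (3), the real work is the change of variable $Y^j = a X^j$. I would first note that $r^2 = X^j X_j = a^2 \sum_j (X^j)^2 = \sum_j (Y^j)^2 = |Y|^2$, so $U_\ast(r^2) = U_\ast(|Y|^2)$. Next I compute
\begin{equation*}
D_t X^j = \frac{D_t Y^j}{a} - \frac{(D_t a)\, Y^j}{a^2},
\qquad
D_t^2 X^j = \frac{D_t^2 Y^j}{a} - \frac{2(D_t a)\, D_t Y^j}{a^2} - \frac{(D_t^2 a)\, Y^j}{a^2} + \frac{2(D_t a)^2\, Y^j}{a^3},
\end{equation*}
multiply \eqref{Eq-X} through by $a$, and use $D_t a^2/a^2 = 2 D_t a /a$ so that the first-order terms in $Y^j$ cancel. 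What remains is $D_t^2 Y^j - (D_t^2 a/a)\, Y^j + U_\ast(|Y|^2)\, Y^j = 0$. It then suffices to check that $A = -D_t^2 a / a$. Writing $q_0 = 2 D_t a / a$, I compute $q_0^2/4 = (D_t a)^2/a^2$ and $D_t q_0 /2 = D_t^2 a / a - (D_t a)^2/a^2$, whose sum is $D_t^2 a / a$, so indeed $A = -D_t^2 a/a$, and \eqref{Lem-6-1000} follows.

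The only mildly subtle point, and the step I would be most careful with, is the metric bookkeeping in (3): verifying that the squared magnitude $r^2 = X^j X_j$ in the argument of $U_\ast$ coincides with the Euclidean $|Y|^2$ (so that no residual powers of $a$ appear in the nonlinearity) and that the cross-term $2 D_t a\, D_t X^j$ in \eqref{Eq-X} cancels precisely against the mixed terms produced by differentiating $Y^j/a$ twice. All other steps are essentially mechanical once these two identities are in place.
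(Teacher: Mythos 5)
Your proof is correct and follows essentially the same route as the paper: the same computation $\partial_j(x^k x_k)=2x_k$ for (1), direct substitution for (2), and the same change of variable $Y^j=aX^j$ with the same cancellations for (3). The only difference is cosmetic---you route the final identification through the identity $A=-D_t^2 a/a$, whereas the paper matches the coefficient of $Y^j$ directly against the definition \eqref{Y-A}; both verifications are equivalent, and your identity is indeed valid since $-\tfrac{1}{4}q_0^2-\tfrac{1}{2}D_tq_0=-(D_ta/a)^2-\bigl(D_t^2a/a-(D_ta/a)^2\bigr)=-D_t^2a/a$.
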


\begin{proof}
(1) By $\partial_j(x^k x_k)=2x_j$, we have 
\[
\partial_j U(x')=\frac{m}{2}U_\ast(r^2)\partial_j(x^k x_k)=m U_\ast(r^2) x_j.
\]
Thus, we obtain 
\[
\anglex{\partial^j U(x')}
=
m U_\ast(r^2) \anglex{x^j}
=
m U_\ast(r^2) X^j.
\]

(2) 
The equation \eqref{Eq-X} follows from \eqref{Lem-5-1000} directly 
by $X^j=\anglex{x^j}$ and (1).

(3) 
We have $X^j=a^{-1} Y^j$ and $r^2=X^j X_j=|Y|^2$ 
by \eqref{Def-Modulo}.
By 
\beq
\label{Lem-6-2000}
D_t X^j=a^{-1}\left(D_t-\frac{D_t a^2}{2a^2}\right)Y^j,
\eeq
we have 
\beq
\label{Lem-6-3000}
D_t^2X^j=a^{-1}
\left[
D_t^2 Y^j-\frac{D_t a^2}{a^2} D_t Y^j 
+
\frac{1}{4}
\left\{
\left(\frac{D_t a^2}{a^2}\right)^2-2D_t\left(\frac{D_t a^2}{a^2}\right)
\right\}Y^j
\right].
\eeq
By $\partial^j U(x')=m U_\ast(r^2) x^j$, we have 
$\anglex{\partial^j U}=m U_\ast(r^2) X^j$.
We have the required equation \eqref{Lem-6-1000} by 
\eqref{Eq-X}, 
\eqref{Lem-6-2000} and \eqref{Lem-6-3000}.
\end{proof}

To rewrite the Cauchy problem \eqref{Cauchy-Y} as the integral equation,
we prepare some fundamental results for ordinary differential equations.

\begin{lemma}
\label{Lem-7}
For any fixed nonnegative function 
$\widetilde{a}\in C([0,T))$ for $T>0$, 
let $\rho_0$ and $\rho_1$ be the solutions of the Cauchy problem 
\beq
\label{Lem-7-1000}
\left\{
\begin{array}{l}
\left(D_t^2+\widetilde{a}(t)\right) \rho_j(t)=0 \ \  \mbox{for} \ \ t\in [0,T), \\
\rho_j(0)=\delta_{0j}, \ \ D_t\rho_j(0)=\delta_{1j},
\end{array}
\right.
\eeq
where $\delta_{ij}=1$ for $0\le i=j\le 1$,  $\delta_{ij}=0$ for $0\le i\neq j\le 1$.
Put 
$\widetilde{A}:=
\begin{pmatrix}
0 & 1 \\
-\widetilde{a} & 0
\end{pmatrix}$, 
\[
\Phi_m(t):=
\begin{cases}
E & \mbox{if}\ \ m=0,
\\
\int_0^t \int_0^{t_1} \cdots \int_0^{t_{m-1}} 
\widetilde{A}(t_1)
\widetilde{A}(t_2)\cdots 
\widetilde{A}(t_m) 
dt_m\cdots dt_2 dt_1 
& \mbox{if}\ \ m\ge1,
\end{cases}
\]
where $E$ denotes the unit matrix, 
$\Phi:=\sum_{m=0}^\infty \Phi_m$.
Let $b\in L^1((0,T))$, and let $\rho$ be the solution of the equation  
\beq
\label{Lem-7-2000}
(D_t^2+\widetilde{a}(t))\rho(t)=b(t)
\eeq
for $0\le t<T$.
Then the following results hold.

(1) 
$\Phi=
\begin{pmatrix}
\rho_0 & \rho_1 \\
D_t \rho_0 & D_t\rho_1
\end{pmatrix}$.

(2) $\det \Phi=1$.

(3) The solution $\rho$ is given by 
\[
\begin{pmatrix}
\rho(t) \\
D_t\rho(t)
\end{pmatrix}
=\Phi(t)
\begin{pmatrix}
\rho(0) \\
D_t\rho(0)
\end{pmatrix}
+
\int_0^t
\Phi(t)\Phi(s)^{-1} 
\begin{pmatrix}
0 \\
b(s)
\end{pmatrix}
ds,
\]
which is rewritten as 
\begin{eqnarray}
\rho(t)&=&\rho_0(t)\rho(0)+\rho_1(t)D_t \rho(0)+\int_0^t \rho_{12}(t,s) b(s) ds,
\label{Lem-7-3000}
\\
D_t\rho(t)&=&D_t\rho_0(t)\rho(0)+D_t\rho_1(t)D_t\rho(0)+\int_0^t \rho_{22}(t,s) b(s) ds,
\label{Lem-7-4000}
\end{eqnarray}
where $\rho_{12}$ and $\rho_{22}$ are defined by 
\begin{eqnarray}
\rho_{12}(t,s)&:=&-\rho_0(t)\rho_1(s)+\rho_1(t)\rho_0(s),
\label{Lem-7-4100}
\\ 
\rho_{22}(t,s)&:=&-D_t\rho_0(t)\rho_1(s)+D_t\rho_1(t)\rho_0(s).
\label{Lem-7-4200}
\end{eqnarray}

(4) 
If $\widetilde{a}\ge0$ and $D_t \widetilde{a}\le 0$ on $[0,T)$, then 
\[
|\rho_0(t)|\le 
\sqrt{ \frac{\widetilde{a}(0)}{\widetilde{a}(t)} },
\ \ 
|D_t\rho_0(t)|\le \sqrt{ \widetilde{a}(0) },
\ \ 
|\rho_1(t)|\le 
\sqrt{ \frac{1}{\widetilde{a}(t)} },
\ \ 
|D_t\rho_1(t)|\le 1.
\]

(5) 
If $\widetilde{a}\ge0$ and $D_t \widetilde{a}\ge 0$ on $[0,T)$, then 
\[
|\rho_0(t)|\le 1, 
\ \ 
|D_t\rho_0(t)|\le \sqrt{ \widetilde{a}(t) },
\ \ 
|\rho_1(t)|\le 
\sqrt{ \frac{1}{\widetilde{a}(0)} },
\ \ 
|D_t\rho_1(t)|\le 
\sqrt{ \frac{\widetilde{a}(t)}{\widetilde{a}(0)} }.
\]

(6) 
$\rho\in C([0,T))$.
Moreover, if $b\in C([0,T))$, then $\rho\in C^1([0,T))$.
\end{lemma}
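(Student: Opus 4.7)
The plan is to treat \eqref{Lem-7-1000}--\eqref{Lem-7-2000} via the first-order system
\[
D_t \begin{pmatrix} \rho \\ D_t\rho \end{pmatrix}
= \widetilde{A}(t) \begin{pmatrix} \rho \\ D_t\rho \end{pmatrix}
+ \begin{pmatrix} 0 \\ b(t) \end{pmatrix},
\]
and to extract the pointwise bounds by two tailored energy identities.

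For (1) I would verify that the Peano--Baker series $\Phi = \sum_{m\ge 0} \Phi_m$ converges uniformly on each compact subinterval of $[0,T)$: on $[0,T']$ one has $\|\Phi_m(t)\|\le M^m t^m/m!$ with $M=\sup_{[0,T']}\|\widetilde{A}\|$, whence normal convergence. Term-by-term differentiation yields $D_t\Phi = \widetilde{A}\Phi$ and $\Phi(0)=E$, so by uniqueness the two columns of $\Phi$ coincide with the solutions of \eqref{Lem-7-1000} for $j=0,1$. Statement (2) then follows from Liouville's formula: since $\mathrm{tr}\,\widetilde{A} = 0$, one has $D_t \det\Phi = (\mathrm{tr}\,\widetilde{A})\det\Phi = 0$, and $\det\Phi(0)=1$. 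For (3), I would use the variation-of-parameters formula for the first-order system, multiplying by the integrating factor $\Phi(t)\Phi(s)^{-1}$; the explicit shape of $\Phi^{-1}$ (via the cofactor formula, simplified by $\det\Phi=1$) gives \eqref{Lem-7-3000}--\eqref{Lem-7-4000} with $\rho_{12}, \rho_{22}$ as in \eqref{Lem-7-4100}--\eqref{Lem-7-4200}.

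The main obstacle is (4) and (5), where the two monotonicity regimes require different energies. For (4), I would set
\[
E_j(t) := (D_t\rho_j(t))^2 + \widetilde{a}(t)\,\rho_j(t)^2,
\]
use $D_t^2\rho_j = -\widetilde{a}\rho_j$ to get $D_t E_j = D_t\widetilde{a}\cdot \rho_j^2 \le 0$ when $D_t\widetilde{a}\le 0$, so $E_j(t)\le E_j(0)$. For $j=0$ this reads $(D_t\rho_0)^2+\widetilde{a}(t)\rho_0^2 \le \widetilde{a}(0)$, giving the two bounds on $\rho_0,D_t\rho_0$; for $j=1$ it reads $(D_t\rho_1)^2+\widetilde{a}(t)\rho_1^2 \le 1$, giving the two bounds on $\rho_1,D_t\rho_1$. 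For (5), the same energy now increases, so it is not useful; instead I would use the reciprocal energy
\[
\widetilde{E}_j(t) := \frac{(D_t\rho_j(t))^2}{\widetilde{a}(t)} + \rho_j(t)^2,
\]
whose derivative computes to $-(D_t\rho_j)^2\,D_t\widetilde{a}/\widetilde{a}^2 \le 0$ when $D_t\widetilde{a}\ge 0$. Then $\widetilde{E}_0(t)\le \widetilde{E}_0(0)=1$ and $\widetilde{E}_1(t)\le \widetilde{E}_1(0)=1/\widetilde{a}(0)$ produce exactly the four inequalities of (5). (Division by $\widetilde{a}$ is legitimate here because the bounds in (5) are already vacuous where $\widetilde{a}=0$.)

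Finally, (6) is immediate from the integral representation in (3): the right-hand side of \eqref{Lem-7-3000} is a sum of a continuous function and an integral of an $L^1$ function against a bounded kernel $\rho_{12}(t,s)$, hence continuous in $t$, so $\rho\in C([0,T))$; and \eqref{Lem-7-4000} exhibits $D_t\rho$ as such a sum whenever $b\in C([0,T))$, yielding $\rho\in C^1([0,T))$.
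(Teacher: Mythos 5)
Your proposal is correct, and for parts (1)--(5) it is essentially the paper's own argument: the same term-by-term differentiation $D_t\Phi=\widetilde{A}\Phi$ (you add the normal-convergence estimate $\|\Phi_m(t)\|\le M^mt^m/m!$, which the paper silently omits), the same variation-of-parameters computation with the cofactor formula for $\Phi^{-1}$, and exactly the two energy functionals $(D_t\rho_j)^2+\widetilde{a}\rho_j^2$ and $\widetilde{a}^{-1}(D_t\rho_j)^2+\rho_j^2$ with the same sign computations. For (2) you invoke Liouville's formula via $\mathrm{tr}\,\widetilde{A}=0$ where the paper differentiates the Wronskian $\rho_0D_t\rho_1-D_t\rho_0\rho_1$ directly; these are the same fact. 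Your parenthetical in (5) about division by $\widetilde{a}$ is slightly imprecise (the bound $|\rho_0|\le1$ is not vacuous where $\widetilde{a}$ vanishes), but the paper makes no remark at all here, and in the only case where (5) is applied one has $A>0$ strictly by Lemma \ref{Lem-9}(7), so this is harmless.

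The one genuine divergence is (6). You deduce $\rho\in C^1$ by reading $D_t\rho$ off the already-established identity \eqref{Lem-7-4000} and observing that its right-hand side is continuous; this is legitimate for the lemma as literally stated, where $\rho$ is assumed to solve \eqref{Lem-7-2000}. The paper instead works directly on the explicit function $\phi(t)=\int_0^t\rho_{12}(t,s)b(s)ds$: it proves continuity by splitting the increment and dominated convergence, and differentiability by a difference-quotient argument in which the diagonal vanishing $\rho_{12}(t,t)=0$ kills the boundary term, yielding $D_t\phi(t)=\int_0^tD_t\rho_{12}(t,s)b(s)ds$. This extra work is not pedantry: in the application (part (2) of the proof of Theorem \ref{Thm-1}), the object whose regularity is needed is the fixed point of the integral operator $\Psi$ in \eqref{Def-Psi}, which is \emph{defined} by the right-hand side of \eqref{Lem-7-3000} and is not a priori known to be a differentiable solution of the ODE; your shortcut would be circular there, since it presupposes that $D_t\rho$ exists and satisfies \eqref{Lem-7-4000}. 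So your plan proves the stated lemma, but to serve the paper's purpose you would still need the paper's direct regularity argument for the Duhamel term (or a separate verification that the integral formula defines a classical solution). Also note that your "bounded kernel, hence continuous" step conceals exactly the splitting-plus-dominated-convergence argument the paper spells out; it is standard, but it is the whole content of the first half of (6).
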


\begin{proof}
(1) 
We note that the solution $\rho$ in \eqref{Lem-7-2000} with $b=0$ satisfies 
\[
D_t 
\begin{pmatrix}
\rho(t) \\
D_t\rho(t)
\end{pmatrix}
=\widetilde{A}(t) 
\begin{pmatrix}
\rho(t) \\
D_t\rho(t)
\end{pmatrix}
.
\]
We have 
$D_t \Phi=\sum_{m=1}^\infty \widetilde{A} \Phi_{m-1} =\widetilde{A}\Phi$ 
by $D_t \Phi_0=0$ and $D_t \Phi_m=\widetilde{A} \Phi_{m-1}$ for $m\ge1$. 
Thus, the solution $\rho$ in \eqref{Lem-7-2000} with $b=0$ satisfies 
\[
\begin{pmatrix}
\rho(t) \\
D_t\rho(t)
\end{pmatrix}
=
\Phi(t) 
\begin{pmatrix}
\rho(0) \\
D_t\rho(0)
\end{pmatrix}
\]
since it satisfies  
\[
D_t
\begin{pmatrix}
\rho(t) \\
D_t\rho(t)
\end{pmatrix}
=
D_t\Phi(t) 
\begin{pmatrix}
\rho(0) \\
D_t\rho(0)
\end{pmatrix}
=
\widetilde{A}(t) 
\Phi(t) 
\begin{pmatrix}
\rho(0) \\
D_t\rho(0)
\end{pmatrix}
=
\widetilde{A}(t) 
\begin{pmatrix}
\rho(t) \\
D_t\rho(t)
\end{pmatrix}.
\]
So that, the solutions $\rho_0$ and $\rho_1$ satisfy 
$
\begin{pmatrix}
\rho_0 \\
D_t\rho_0
\end{pmatrix}
=\Phi
\begin{pmatrix}
1 \\
0
\end{pmatrix}
$ 
and 
$
\begin{pmatrix}
\rho_1 \\
D_t\rho_1
\end{pmatrix}
=\Phi
\begin{pmatrix}
0 \\
1
\end{pmatrix}
$, 
which yields 
$
\begin{pmatrix}
\rho_0 & \rho_1 \\
D_t \rho_0 & D_t\rho_1
\end{pmatrix}=\Phi E=\Phi$, namely, 
the required result.

(2) 
Since $\det \Phi=\rho_0D_t\rho_1-D_t\rho_0 \rho_1$ by (1), 
we have 
\[
D_t \det \Phi
=
D_t\rho_0 D_t \rho_1
+
\rho_0D_t^2\rho_1
-
D_t^2\rho_0 \rho_1
-
D_t\rho_0D_t\rho_1=0
\]
by 
$D_t^2\rho_0=-\widetilde{a} \rho_0$ 
and 
$D_t^2\rho_1=-\widetilde{a} \rho_1$. 
So that, we have $\det \Phi(t)=\det \Phi(0)=1$ by $\Phi(0)=E$.

(3) 
Since the solution $\rho$ in \eqref{Lem-7-2000} satisfies 
\beq
\label{Proof-Lem-7-3-500}
D_t 
\begin{pmatrix}
\rho(t) \\
D_t \rho(t)
\end{pmatrix}
=\widetilde{A}(t)
\begin{pmatrix}
\rho(t) \\
D_t \rho(t)
\end{pmatrix}
+
\begin{pmatrix}
0 \\
b(t)
\end{pmatrix},
\eeq
we have
\beq
\label{Proof-Lem-7-3-1000}
\begin{pmatrix}
\rho(t) \\
D_t \rho(t)
\end{pmatrix}
=
\Phi(t)
\begin{pmatrix}
\rho(0) \\
D_t \rho(0)
\end{pmatrix}
+
\Phi(t)
\int_0^t
\Phi(s)^{-1}
\begin{pmatrix}
0 \\
b(s)
\end{pmatrix} 
ds
\eeq
since it satisfies \eqref{Proof-Lem-7-3-500}.
Since we have 
$\Phi^{-1}:=
\begin{pmatrix}
D_t \rho_1 & -\rho_1 \\
-D_t \rho_0 & \rho_0
\end{pmatrix}
$ by (2),
we obtain 
\begin{eqnarray}
&&
\Phi(t)\Phi(s)^{-1}
\nonumber\\
&=&
\begin{pmatrix}
\rho_0(t)D_t \rho_1(s)-\rho_1(t)D_t\rho_0(s) & 
-\rho_0(t)\rho_1(s)+\rho_1(t)\rho_0(s) \\
D_t\rho_0(t)D_t \rho_1(s)-D_t \rho_1(t)D_t\rho_0(s) & 
-D_t\rho_0(t)\rho_1(s)+D_t\rho_1(t)\rho_0(s) 
\end{pmatrix}
\nonumber
\\
&=:&
\begin{pmatrix}
\rho_{11}(t,s) & \rho_{12}(t,s) \\
\rho_{21}(t,s) & \rho_{22}(t,s)
\end{pmatrix},
\label{Proof-Lem-7-3-2000}
\end{eqnarray}
where we have defined $\rho_{11}$, $\rho_{12}$, $\rho_{21}$, $\rho_{22}$ by the right hand side.
We obtain 
\eqref{Lem-7-3000} 
and 
\eqref{Lem-7-4000}
by (1), 
\eqref{Proof-Lem-7-3-1000} 
and 
\eqref{Proof-Lem-7-3-2000}.

(4) 
Since $\rho_0$ and $\rho_1$ are the solutions of 
\eqref{Lem-7-1000},
we have 
\[
D_t \rho_j(D_t^2+\widetilde{a})\rho_j=0
\]
for $j=0, 1$.
By 
$D_t \rho_j D_t^2\rho_j=D_t(D_t\rho_j)^2/2$ 
and 
$D_t \rho_j \rho_j=D_t (\rho_j^2)/2$, 
we have 
\beq
\label{Lem-7-5000}
D_t (D_t \rho_j)^2+\widetilde{a} D_t \rho_j^2=0,
\eeq
which is rewritten as 
\[
D_t\left\{
(D_t \rho_j)^2+\widetilde{a}\rho_j^2
\right\}
=
D_t \widetilde{a} \cdot \rho_j^2
\]
by 
$\widetilde{a}D_t\rho_j^2=D_t\left(\widetilde{a} \rho_j^2 \right)-D_t\widetilde{a} \cdot \rho_j^2$.
Under the condition $D_t\widetilde{a}\le 0$, 
we have 
\[
(D_t \rho_j)^2(t)+\widetilde{a}(t)\rho_j^2(t)
\le 
(D_t \rho_j)^2(0)+\widetilde{a}(0)\rho_j^2(0),
\]
namely, 
\[
(D_t \rho_0)^2(t)+\widetilde{a}(t)\rho_0^2(t)
\le 
\widetilde{a}(0),
\ \ 
(D_t \rho_1)^2(t)+\widetilde{a}(t)\rho_1^2(t)
\le 
1,
\]
which yield the required inequalities.

(5) 
We rewrite \eqref{Lem-7-5000} as 
\[
\widetilde{a}^{-1} D_t(D_t\rho_j)^2+D_t \rho_j^2=0.
\]
By 
$\widetilde{a}^{-1} D_t(D_t\rho_j)^2
=
D_t\left(\widetilde{a}^{-1} (D_t\rho_j)^2\right)
+\widetilde{a}^{-2}D_t \widetilde{a} (D_t\rho_j)^2$, 
we have 
\[
D_t\left\{
\widetilde{a}^{-1} (D_t \rho_j)^2+\rho_j^2
\right\}
=
-\widetilde{a}^{-2} D_t \widetilde{a} \cdot (D_t \rho_j)^2.
\]
Under the conditions $\widetilde{a}\ge0$ and $D_t\widetilde{a}\ge 0$, 
we have 
\[
\rho_j^2(t)
+
\widetilde{a}^{-1}(t)(D_t\rho_j)^2(t)
\le 
\rho_j^2(0)
+
\left(\widetilde{a}^{-1}\left(D_t\rho_j\right)^2\right)(0),
\]
namely, 
\[
\rho_0^2(t)+\widetilde{a}^{-1}(t)(D_t\rho_0)^2(t)
\le 
1,
\ \ 
\rho_1^2(t)+\widetilde{a}^{-1}(t)(D_t\rho_1)^2(t)
\le 
\widetilde{a}^{-1}(0),
\]
which yield the required inequalities.

(6) 
We note that $\rho$ is given by \eqref{Lem-7-3000}.
Since we have $\rho_0,\rho_1\in C^2([0,T))$ by 
$\widetilde{a}\in C([0,T))$ and 
$D_t^2\rho_j=-\widetilde{a}\rho_j\in C([0,T))$,
we have 
$\rho_0(\cdot)\rho(0)+\rho_1(\cdot)D_t\rho(0)\in C^2([0,T))$ 
and 
$\rho_{12}\in C^2([0,T)\times[0,T))$.
Put 
$\phi(t):=\int_0^t\rho_{12}(t,s)b(s)ds$.
For any $t\in [0,T)$ and $\varepsilon\in \br$ with $|\varepsilon|$ sufficiently small, 
we have 
\begin{eqnarray*}
\phi(t+\varepsilon)-\phi(t)
&=&
\int_0^{t+\varepsilon}\rho_{12}(t+\varepsilon,s)b(s)ds
-
\int_0^{t}\rho_{12}(t,s)b(s)ds
\\
&=& 
\int_t^{t+\varepsilon}\rho_{12}(t+\varepsilon,s)b(s)ds
+
\int_0^{t}\left(\rho_{12}(t+\varepsilon,s)-\rho_{12}(t,s)\right)b(s)ds
\\
&=:& 
I+\two,
\end{eqnarray*}
where we take $\varepsilon>0$ when $t=0$.
Let $T_\ast$ satisfy $t+|\varepsilon|<T_\ast<T$.
We have 
\[
|I|\le \|\rho\|_{L^\infty((0,T_\ast))} \cdot \left|\int_t^{t+\varepsilon} |b(s)|ds\right|
\to 0
\]
as $\varepsilon\to0$ by $b\in L^1((0,T))$.
We also have 
\[
|\two|\le \int_0^t \left|\rho_{12}(t+\varepsilon,s)-\rho_{12}(t,s)\right|\cdot |b(s)| ds,
\]
\[ 
\left|\rho_{12}(t+\varepsilon,s)-\rho_{12}(t,s)\right|\to0
\ \ \mbox{as}\ \ \varepsilon\to0,
\] 
and 
$|\two|\le 2\|\rho_{12}\|_{L^\infty((0,T_\ast))}\cdot \|b\|_{L^1((0,T_\ast))}$,
by which we have $|\two|\to0$ as $\varepsilon\to0$ by the Lebesgue convergence theorem.
Thus, we have $\phi\in C([0,T))$.
So that, we obtain $\rho\in C([0,T))$ 
by \eqref{Lem-7-3000}.

Let us consider the case $b\in C([0,T))$. 
We have 
\[
\frac{1}{\varepsilon}\left(\phi(t+\varepsilon)-\phi(t)\right)
=\three+\four,
\]
where we have put 
\[
\three:=
\frac{1}{\varepsilon}\int_t^{t+\varepsilon} \rho_{12}(t+\varepsilon,s)b(s)ds,
\ \ 
\four:=
\int_0^t 
\frac{\rho_{12}(t+\varepsilon,s)-\rho_{12}(t,s)}{\varepsilon} \cdot b(s) ds.
\]
We have 
$\three\to \rho_{12}(t,t)b(t)=0$ 
by 
$\rho_{12}(t,t)=0$ 
and 
$b\in C([0,T))$. 
We also have 
$\four\to \int_0^t D_t \rho_{12}(t,s)b(s) ds$ 
by 
$\rho_{12}\in C^1([0,T)\times[0,T))$. 
Thus, 
we have 
\[
D_t\phi(t)=\int_0^t D_t \rho_{12}(t,s)b(s)ds.
\] 
We have $D_t\phi\in C([0,T))$ by $D_t\rho_{12}\in C([0,T)\times[0,T))$ and $b\in L^1((0,T))$.
Thus, we have $\phi\in C^1([0,T))$.
So that, we obtain 
$\rho\in C^1([0,T))$ as required if $b\in C([0,T))$.
\end{proof}

\begin{lemma}
\label{Lem-8}
The scale-function $a(\cdot)$ and $q_0(\cdot)$ defined by \eqref{a} and \eqref{A} satisfy 
the followings. 
\[
(1)\ \ 
q_0=\frac{2a_1}{a_0}
\left(1+\frac{n(1+\sigma)a_1 t}{2a_0}\right)^{-1} 
\ \ \ \ 
(2)\ \ 
D_t q_0=-\frac{n(1+\sigma)}{4} q_0^2
\]
\end{lemma}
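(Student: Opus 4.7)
The plan is to verify both identities by direct computation, treating the generic case $\sigma\neq -1$ first and then checking that the exponential case $\sigma=-1$ follows either by convention or as a trivial special case.

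For (1), I would start from the chain of identities $q_0=D_ta^2/a^2=2D_ta/a$, so it suffices to evaluate $D_ta/a$. Setting $u(t):=1+n(1+\sigma)a_1t/(2a_0)$ for bookkeeping, the defining formula $a=a_0 u^{2/\{n(1+\sigma)\}}$ gives
\[
D_ta=a_0\cdot\frac{2}{n(1+\sigma)}u^{2/\{n(1+\sigma)\}-1}\cdot\frac{n(1+\sigma)a_1}{2a_0}=a_1u^{2/\{n(1+\sigma)\}-1},
\]
so that $D_ta/a=(a_1/a_0)u^{-1}$ and hence $q_0=(2a_1/a_0)u^{-1}$, which is exactly the claimed formula. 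For $\sigma=-1$, the exponential form of $a$ gives $D_ta/a=a_1/a_0$, and this agrees with the right-hand side of (1) since the factor $n(1+\sigma)$ inside the parenthesis then vanishes and the parenthesis equals $1$.

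For (2), I would simply differentiate the formula from (1). With the same $u$, one has
\[
D_tq_0=\frac{2a_1}{a_0}\cdot(-u^{-2})\cdot\frac{n(1+\sigma)a_1}{2a_0}=-\frac{n(1+\sigma)a_1^2}{a_0^2}u^{-2},
\]
while squaring (1) gives $q_0^2=(4a_1^2/a_0^2)u^{-2}$, so $-\{n(1+\sigma)/4\}q_0^2$ matches $D_tq_0$ termwise. In the case $\sigma=-1$ both sides of (2) vanish trivially since $q_0=2a_1/a_0$ is then constant.

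There is no substantive obstacle here — the lemma is a bookkeeping computation used later to rewrite $A(t)$ in closed form — so the only care needed is in handling the exponent $2/\{n(1+\sigma)\}$ and the factor $n(1+\sigma)$ consistently, and in remarking once that the $\sigma=-1$ case can be recovered from the $\sigma\neq-1$ formulas by a limiting interpretation (equivalently, by a direct computation on $a_0e^{a_1t/a_0}$).
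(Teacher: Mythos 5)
Your proposal is correct and follows essentially the same route as the paper: compute $q_0=2D_ta/a$ directly from the power-law formula for $a$, then differentiate the resulting expression and match it against $q_0^2$. Your explicit check of the $\sigma=-1$ case is a small addition the paper leaves implicit (there $q_0=2a_1/a_0$ is constant and both sides of (2) vanish), but it does not change the argument.
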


\begin{proof}
(1) Since we have 
\[
D_t a(t)=a(t)\frac{a_1}{a_0} 
\left(
1+\frac{n(1+\sigma)a_1 t}{2a_0}
\right)^{-1}
\] 
and 
$q_0=2D_ta/a$, we obtain the required result.

(2) By (1), we have 
\[
D_t q_0
=
-\frac{n(1+\sigma)a_1^2}{a_0^2} 
\left(
1+\frac{n(1+\sigma)a_1 t}{2a_0}
\right)^{-2}
=
-\frac{n(1+\sigma)}{4}q_0^2
\] 
as required.
\end{proof}

Let us classify the cases $A>0$, $D_t A>0$, $D_t A<0$ as follows 
which are needed when we consider the energy estimates for the Cauchy problem 
\eqref{Cauchy-Y}.

\begin{lemma}
\label{Lem-9}
For the function $A$ defined by \eqref{A}, 
the following results hold.

(1) $A=\frac{n}{8}\left(\sigma+1-\frac{2}{n}\right) q_0^2$

(2) 
$A=0$ holds if and only if $\sigma=-1+2/n$ or $a_1=0$.
$A>0$ holds if and only if $\sigma>-1+2/n$ and $a_1\neq 0$. 

(3) $D_t A=-\frac{n^2}{16}\left(\sigma+1-\frac{2}{n}\right) (\sigma+1)q_0^3$

(4) 
$D_t A=0$ holds if and only if $\sigma=-1+2/n$ or $\sigma=-1$ or $a_1=0$.
$D_t A>0$ holds if and only if $(\sigma+1-2/n)(\sigma+1)a_1<0$.
$D_t A<0$ holds if and only if $(\sigma+1-2/n)(\sigma+1)a_1>0$.

(5) 
If $a_1=0$ or $\sigma=-1+2/n$, then $A=D_t A=0$.

(6) 
If $a_1>0$ and $\sigma>-1+2/n$, then $A>0$ and $D_t A<0$.

(7) 
If $a_1<0$ and $\sigma>-1+2/n$, then $A>0$ and $D_t A>0$.
\end{lemma}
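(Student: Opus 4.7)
The plan is to reduce all seven assertions to the explicit formulas for $q_0$ and $D_t q_0$ supplied by Lemma~\ref{Lem-8}, after which everything becomes a matter of reading off signs of a few elementary factors. The definition $A = -\frac{1}{4}q_0^2 - \frac{1}{2}D_t q_0$ combined with $D_t q_0 = -\frac{n(1+\sigma)}{4}q_0^2$ from Lemma~\ref{Lem-8}(2) gives, after collecting the two like terms, $A = \frac{q_0^2}{8}(n(1+\sigma)-2)$, which is exactly (1). Since $q_0^2 \ge 0$, assertion (2) reads off at once: $A = 0$ requires either $q_0 = 0$ (equivalent to $a_1 = 0$ by Lemma~\ref{Lem-8}(1)) or $n(1+\sigma)-2 = 0$, while $A > 0$ requires both $a_1 \ne 0$ and $\sigma > -1 + 2/n$.

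For (3), I differentiate (1) and reapply Lemma~\ref{Lem-8}(2) to obtain
\[
D_t A = \frac{n}{4}\left(\sigma + 1 - \frac{2}{n}\right) q_0 \, D_t q_0
= -\frac{n^2}{16}\left(\sigma+1-\frac{2}{n}\right)(\sigma+1)\,q_0^3.
\]
The sign analysis in (4) then hinges on the sign of $q_0^3$, equivalently of $q_0$, on $[0,T_1)$. By the very definition of $T_1$, the factor $1 + n(1+\sigma)a_1 t/(2a_0)$ appearing in Lemma~\ref{Lem-8}(1) is strictly positive on $[0,T_1)$ (and equals $1$ when $\sigma = -1$ or $a_1 = 0$), so the sign of $q_0$ coincides with that of $a_1$. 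Hence $D_t A = 0$ iff at least one of $\sigma+1-2/n$, $\sigma+1$, $a_1$ vanishes, and the strict inequalities in (4) mirror the sign of $-(\sigma+1-2/n)(\sigma+1)a_1$.

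Items (5)--(7) are pure case inspection based on (2) and (4). If $a_1 = 0$ or $\sigma = -1 + 2/n$, then both $A$ and $D_t A$ contain a vanishing factor, giving (5). If $a_1 > 0$ and $\sigma > -1+2/n$, then $\sigma+1 > 2/n > 0$ is automatic, so $(\sigma+1-2/n)(\sigma+1)a_1 > 0$, which forces $A > 0$ and $D_t A < 0$; flipping the sign of $a_1$ yields (7). The proof presents no serious obstacle; the only point that requires a moment of care is invoking the definition of $T_1$ to guarantee positivity of the denominator in the formula for $q_0$, as this is precisely what ties the sign of $q_0$ on $[0,T_1)$ to the sign of $a_1$.
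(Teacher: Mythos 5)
Your proposal is correct and follows essentially the same route as the paper: substitute $D_t q_0=-\tfrac{n(1+\sigma)}{4}q_0^2$ from Lemma~\ref{Lem-8} into the definition \eqref{A} to get (1), differentiate and reapply Lemma~\ref{Lem-8} for (3), and read off the signs for (2), (4)--(7). Your explicit remark that $1+n(1+\sigma)a_1t/2a_0>0$ on $[0,T_1)$, so that the sign of $q_0$ matches that of $a_1$, is a point the paper leaves implicit in its citation of (1) in Lemma~\ref{Lem-8}, and it is a welcome clarification rather than a deviation.
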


\begin{proof}
The result (1) follows from the definition of $q_0$ and (2) in Lemma \ref{Lem-8} 
as 
\[
A
=
-\frac{1}{4}q_0^2-\frac{D_t q_0}{2} 
=
-\frac{q_0^2}{4}+\frac{n(1+\sigma)q_0^2}{8}
=
\frac{n q_0^2}{8}\left(\sigma+1-\frac{2}{n}\right).
\]
The result (2) follows from (1) directly, 
where $q_0=0$ is equivalent to $a_1=0$ 
by (1) in Lemma \ref{Lem-8}.
The result (3) follows from $D_t A=n(\sigma+1-2/n)q_0D_t q_0/4$ and (2) in Lemma \ref{Lem-8}.
The result (4) follows from (3), and (1) in Lemma \ref{Lem-8}.
The results (5), (6) and (7) follow from (2) and (4).
\end{proof}

\begin{lemma}[Energy estimates when $D_t A\le0$] 
\label{Lem-10}
Let $0<T\le T_1$.
For any function $h=(h^1,\cdots,h^n)$, 
let $Y=(Y^1,\cdots,Y^n)$ be the solution of the equation
\beq
\label{Lem-10-1000}
D_t^2Y^j(t)+A(t)Y^j(t)+h^j(t)=0
\eeq
for $1\le j\le n$ and $0\le t<T$.
Let $A\ge0$ and $D_t A\le 0$ on $[0,T)$.

(1) The following estimate holds;
\begin{eqnarray*}
&&\|D_t Y\|_{L^\infty((0,T))}+\|\sqrt{A} \, Y\|_{L^\infty((0,T))}+\|\sqrt{-D_tA} \, Y\|_{L^2((0,T))}
\\
&\lesssim&
|D_t Y(0)|+\sqrt{A(0)}\, |Y(0)|+\|h\|_{L^1((0,T))}.
\end{eqnarray*}

(2) Let $h:=\lambda |Y|^{p-1} Y$ for $\lambda\in \bc$ and $1<p<\infty$.
Put 
\[
e^0:=\frac{1}{2}|D_t Y|^2+\frac{1}{2}A|Y|^2+\frac{\lambda}{p+1}|Y|^{p+1}, 
\ \ 
e^1:=-\frac{1}{2} D_t A|Y|^2.
\]
Then the following estimate holds;
\[
e^0(t)+\int_0^t e^1(s)ds=e^0(0).
\]
\end{lemma}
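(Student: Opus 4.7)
The plan is to run a standard energy argument: take the inner product of the ODE \eqref{Lem-10-1000} with $D_tY$, sum over $j$, and exploit the sign of $D_tA$ given by the hypothesis. Define the energy
\[
E(t):=|D_tY(t)|^2+A(t)|Y(t)|^2.
\]
Using $D_tY^j\cdot D_t^2Y^j=\tfrac12 D_t|D_tY|^2$ and $AY^j\cdot D_tY^j=\tfrac12 D_t(A|Y|^2)-\tfrac12 D_tA\cdot|Y|^2$, multiplying \eqref{Lem-10-1000} by $D_tY^j$ and summing gives the pointwise identity
\[
\tfrac12 D_tE(t)-\tfrac12 D_tA(t)|Y(t)|^2=-h(t)\cdot D_tY(t).
\]

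For part (1), first I would handle the $L^\infty$ bounds. Since $-D_tA\ge 0$, the identity yields the differential inequality $\tfrac12 D_tE\le |h||D_tY|\le |h|\sqrt{E}$, which after the standard manipulation $D_t\sqrt{E}\le |h|$ integrates to
\[
\sqrt{E(t)}\le\sqrt{E(0)}+\|h\|_{L^1((0,T))}.
\]
This controls both $\|D_tY\|_{L^\infty((0,T))}$ and $\|\sqrt{A}\,Y\|_{L^\infty((0,T))}$ by $|D_tY(0)|+\sqrt{A(0)}\,|Y(0)|+\|h\|_{L^1((0,T))}$. Next, integrating the energy identity itself over $[0,t]$ and rearranging gives
\[
\int_0^t(-D_tA(s))|Y(s)|^2\,ds=E(0)-E(t)-2\int_0^t h\cdot D_tY\,ds\le E(0)+2\|D_tY\|_{L^\infty((0,T))}\|h\|_{L^1((0,T))}.
\]
Substituting the already-established bound on $\|D_tY\|_{L^\infty}$ gives the desired $L^2$ estimate for $\sqrt{-D_tA}\,Y$, completing (1).

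For part (2), I would substitute $h=\lambda|Y|^{p-1}Y$ into the pointwise identity above and observe that the right-hand side is a total derivative: since $D_t|Y|^{p+1}=(p+1)|Y|^{p-1}Y\cdot D_tY$, we have
\[
h\cdot D_tY=\frac{\lambda}{p+1}D_t|Y|^{p+1}.
\]
Therefore the identity rewrites as $D_te^0(t)=-e^1(t)$, and integrating over $[0,t]$ yields exactly $e^0(t)+\int_0^t e^1(s)\,ds=e^0(0)$, as claimed.

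There is no serious obstacle: the sign condition $D_tA\le 0$ makes the dissipation term work in the right direction, so the Gronwall-type step closes in one line after taking a square root, and part (2) is just the observation that the nonlinear term integrates exactly. The only point to watch is that $|Y|^{p+1}$ is not $C^1$ at $Y=0$, so the identity $D_t|Y|^{p+1}=(p+1)|Y|^{p-1}Y\cdot D_tY$ should be understood as holding almost everywhere (and is valid in the strong sense where $Y\ne0$), which is sufficient for the integrated form asserted.
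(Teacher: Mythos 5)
Your proposal is correct and follows essentially the same route as the paper: both multiply \eqref{Lem-10-1000} by $D_tY^j$, sum over $j$ to get the identity $D_t\bigl\{\tfrac12|D_tY|^2+\tfrac12 A|Y|^2\bigr\}-\tfrac12 D_tA\,|Y|^2+\sum_j h^jD_tY^j=0$, and part (2) is word-for-word the same observation that $h\cdot D_tY=\tfrac{\lambda}{p+1}D_t|Y|^{p+1}$. The only cosmetic difference is how the cross term is closed in (1): you use the pointwise Gronwall step $D_t\sqrt{E}\le|h|$ (fine, modulo the standard $\sqrt{E+\delta}$ regularization where $E$ vanishes) and then feed the $L^\infty$ bound back into the integrated identity for the $\sqrt{-D_tA}\,Y$ term, whereas the paper integrates once and absorbs $\int|h||D_tY|$ via Young's inequality with a small parameter $\varepsilon$ — both are standard and yield the same estimate.
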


\begin{proof}
Multiplying $D_t Y^j$ to the both sides in the equation \eqref{Lem-10-1000}, we have 
\beq
\label{Lem-10-2000}
D_t
\left\{
\frac{1}{2}|D_t Y|^2+\frac{1}{2}A|Y|^2
\right\}
-\frac{1}{2}D_t A|Y|^2+\sum_{j=1}^n h^j D_t Y^j
=0.
\eeq
Integrating the both sides in this equation on the interval $[0,t]$, we have 
\begin{eqnarray*}
&&
\frac{1}{2}|D_t Y(t)|^2+\frac{1}{2}A(t)|Y(t)|^2
+
\frac{1}{2}\left\|\sqrt{-D_t A} \, Y\right\|_{L^2((0,t))}^2
+
\sum_{j=1}^n \int_0^t h^j(s)D_t Y^j(s) ds
\\
&=&
\frac{1}{2}|D_t Y(0)|^2+\frac{1}{2}A(0)|Y(0)|^2.
\end{eqnarray*}
So that, we have 
\begin{eqnarray*}
&&
\|D_t Y\|^2_{L^\infty((0,T))}
+
\|\sqrt{A}Y\|_{L^\infty((0,T))}
+
\left\|\sqrt{-D_t A} \, Y\right\|_{L^2((0,T))}^2
\\
&\lesssim&
|D_t Y(0)|^2
+
A(0)|Y(0)|^2
+
\sum_{j=1}^n \int_0^T \left|h^j(s)D_t Y^j(s)\right| ds.
\end{eqnarray*}
Since we have 
\[
\sum_{j=1}^n \int_0^T |h^j(s)D_t Y^j(s)| ds
\le
\varepsilon\sum_{j=1}^n \|D_t Y^j\|_{L^\infty((0,T))}^2
+
\frac{1}{4\varepsilon} \sum_{j=1}^n \left(\int_0^T |h^j(s)|ds\right)^2
\]
for any $\varepsilon>0$, 
we obtain the required result taking $\varepsilon$ sufficiently small.

(2) 
Since we have 
\[
\sum_{j=1}^n h^j D_t Y^j
=
\lambda |Y|^{p-1} \sum_{j=1}^n Y^j D_t Y^j
=
\frac{\lambda}{p+1} D_t |Y|^{p+1},
\]
we obtain the required result by \eqref{Lem-10-2000}.
\end{proof}

\begin{lemma}[Energy estimates when $D_t A\ge0$]
\label{Lem-12}
For any function $h=(h^1,\cdots,h^n)$, 
let $Y=(Y^1,\cdots,Y^n)$ be the solution of the equation
\beq
\label{Lem-12-1000}
D_t^2Y^j(t)+A(t)Y^j(t)+h^j(t)=0
\eeq
for $1\le j\le n$ and $0\le t<T$.
Let $A>0$ and $D_t A\ge 0$ on $[0,T)$.

(1) The following estimate holds;
\begin{eqnarray*}
&&
\|A^{-1/2} D_t Y\|_{L^\infty((0,T))}
+\|Y\|_{L^\infty((0,T))}
+\|A^{-1}\sqrt{D_t A} \, D_t Y\|_{L^2((0,T))}
\\
&\lesssim&
|A(0)^{-1/2} D_t Y(0)|+|Y(0)|+\|A^{-1/2} h\|_{L^1((0,T))}.
\end{eqnarray*}

(2) Let $h:=\lambda |Y|^{p-1} Y$ for $\lambda\in \bc$ and $1<p<\infty$.
Put 
\begin{eqnarray*}
e^0&:=&\frac{1}{2}|A^{-1/2}D_t Y|^2+\frac{1}{2}|Y|^2+\frac{\lambda}{p+1}A^{-1}|Y|^{p+1}, 
\\ 
e^1&:=&\frac{1}{2} A^{-2} D_t A|D_t Y|^2+\frac{\lambda}{p+1} A^{-2} D_t A |Y|^{p+1}.
\end{eqnarray*}
Then the following estimate holds;
\[
e^0(t)+\int_0^t e^1(s)ds=e^0(0).
\]
\end{lemma}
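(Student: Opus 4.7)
The strategy mirrors the proof of Lemma \ref{Lem-10}, but because $D_tA \ge 0$ the multiplier $D_tY^j$ no longer produces a favorable sign on the boundary term. Instead, I plan to use the weighted multiplier $A^{-1}D_tY^j$, which is well defined by the hypothesis $A>0$ on $[0,T)$. Multiplying \eqref{Lem-12-1000} by $A^{-1}D_tY^j$, summing over $j$, and using the identity
\[
A^{-1}D_tY\cdot D_t^2Y = D_t\!\left(\tfrac{1}{2}|A^{-1/2}D_tY|^2\right) + \tfrac{1}{2}A^{-2}D_tA\,|D_tY|^2,
\]
together with $Y\cdot D_tY = D_t(|Y|^2/2)$, yields the pointwise identity
\[
D_t\!\left(\tfrac{1}{2}|A^{-1/2}D_tY|^2 + \tfrac{1}{2}|Y|^2\right) + \tfrac{1}{2}A^{-2}D_tA\,|D_tY|^2 + A^{-1}h\cdot D_tY = 0.
\]

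For part (1), I would integrate this identity on $[0,t]$. The term $\tfrac{1}{2}A^{-2}D_tA|D_tY|^2$ is nonnegative by hypothesis and stays on the left, producing exactly the $L^2$-norm of $A^{-1}\sqrt{D_tA}\,D_tY$ appearing in the statement. The forcing term is then controlled by
\[
\left|\int_0^t A^{-1}h\cdot D_tY\,ds\right| \le \|A^{-1/2}D_tY\|_{L^\infty((0,T))}\|A^{-1/2}h\|_{L^1((0,T))},
\]
after which a Young-type splitting with a small parameter $\varepsilon$ absorbs $\varepsilon\|A^{-1/2}D_tY\|_{L^\infty((0,T))}^2$ into the left-hand side once the supremum in $t$ has been taken. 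This produces the stated inequality.

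For part (2) with $h = \lambda|Y|^{p-1}Y$, I would rewrite the forcing contribution as
\[
A^{-1}h\cdot D_tY = \tfrac{\lambda}{p+1}A^{-1}D_t|Y|^{p+1} = \tfrac{\lambda}{p+1}D_t\!\bigl(A^{-1}|Y|^{p+1}\bigr) + \tfrac{\lambda}{p+1}A^{-2}D_tA\,|Y|^{p+1},
\]
so that the pointwise identity above collapses to $D_te^0 + e^1 = 0$ exactly as stated, and integration on $[0,t]$ completes the proof. The main technical point to watch is the algebraic rewriting of $A^{-1}D_tY\cdot D_t^2Y$ and of $A^{-1}D_t|Y|^{p+1}$, whose extra factors $A^{-2}D_tA$ must appear with the correct sign so as to cooperate with the hypothesis $D_tA \ge 0$; once these rewritings are in place, the rest is routine integration and Young's inequality.
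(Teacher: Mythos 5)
Your proposal is correct and follows essentially the same route as the paper: multiplication by the weighted multiplier $A^{-1}D_tY^j$, the identity $A^{-1}D_tY\cdot D_t^2Y = D_t\bigl(\tfrac{1}{2}A^{-1}|D_tY|^2\bigr)+\tfrac{1}{2}A^{-2}D_tA\,|D_tY|^2$, integration on $[0,t]$, and absorption of the forcing term via H\"older and Young's inequality with small $\varepsilon$ after taking the supremum in $t$. Part (2) likewise matches the paper's rewriting of $\tfrac{\lambda}{p+1}A^{-1}D_t|Y|^{p+1}$ into a total derivative plus the $A^{-2}D_tA\,|Y|^{p+1}$ term, so nothing further is needed.
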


\begin{proof}
(1) 
Multiplying $A^{-1}D_t Y^j$ to the both sides in the equation \eqref{Lem-12-1000}, we have 
\beq
\label{Lem-12-2000}
D_t
\left\{
\frac{1}{2}A^{-1}|D_t Y|^2+\frac{1}{2}|Y|^2
\right\}
+
\frac{1}{2}A^{-2} D_t A|D_t Y|^2+A^{-1} \sum_{j=1}^n h^j D_t Y^j
=0.
\eeq
Integrating the both sides in this equation on the interval $[0,t]$, we have 
\begin{multline*}
\frac{1}{2}A^{-1}(t)|D_t Y(t)|^2+\frac{1}{2}|Y(t)|^2
+
\frac{1}{2}\left\|A^{-1}\sqrt{D_t A} \, D_t Y\right\|_{L^2((0,t))}^2
\\
+
\sum_{j=1}^n \int_0^t A^{-1}(s)h^j(s)D_t Y^j(s) ds
=
\frac{1}{2}A^{-1}(0)|D_t Y(0)|^2+\frac{1}{2}|Y(0)|^2.
\end{multline*}
So that, we have 
\begin{multline*}
\|A^{-1/2}D_t Y\|_{L^\infty((0,T))}^2
+
\|Y\|_{L^\infty((0,T))}^2
+
\left\|A^{-1}\sqrt{D_t A} \, D_t Y\right\|_{L^2((0,T))}^2
\\
\lesssim
A^{-1}(0)|D_t Y(0)|^2
+
|Y(0)|^2
+
\sum_{j=1}^n \int_0^T \left|A^{-1}(s)h^j(s)D_t Y^j(s)\right| ds.
\end{multline*}
Since we have 
\begin{multline*}
\sum_{j=1}^n \int_0^T \left|A^{-1}(s) h^j(s)D_t Y^j(s)\right| ds
\le
\|A^{-1/2}D_t Y\|_{L^\infty((0,T))}\|A^{-1/2}h\|_{L^1((0,T))}
\\
\le
\varepsilon \|A^{-1/2} D_t Y\|_{L^\infty((0,T))}^2
+
\frac{1}{4\varepsilon} 
\|A^{-1/2} h\|_{L^1((0,T))}^2
\end{multline*}
for any $\varepsilon>0$, 
we obtain the required result taking $\varepsilon$ sufficiently small.

(2) 
Since we have 
\begin{eqnarray*}
 A^{-1}\sum_{j=1}^n h^j D_t Y^j
&=&
\lambda A^{-1} |Y|^{p-1} \sum_{j=1}^n Y^j D_t Y^j
\\
&=&
\frac{\lambda}{p+1} A^{-1}D_t |Y|^{p+1}
\\
&=&
D_t \left(\frac{\lambda}{p+1} A^{-1}|Y|^{p+1}\right)
+
\frac{\lambda}{p+1} A^{-2}D_t A|Y|^{p+1},
\end{eqnarray*}
we obtain the required result by \eqref{Lem-12-2000}.
\end{proof}

We prepare some estimates for the semilinear term in 
\eqref{Def-f(Y)} as follows.

\begin{lemma}
\label{Lem-11}
For any $Y$, $Z\in \brn$ and $1<p<\infty$, the following inequalities hold.

(1) $||Y|^{p-1}Y-|Z|^{p-1}Z|\le p\left(|Y|^{p-1}+|Z|^{p-1}\right)|Y-Z|$

(2) $||Y|^{p}-|Z|^p|\le p\left( \max\{|Y|,|Z|\} \right)^{p-1} |Y-Z|$
\end{lemma}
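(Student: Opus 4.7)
The plan is to derive both inequalities from the mean value theorem applied along the line segment joining $Y$ and $Z$, together with the elementary bound $\bigl||Y|-|Z|\bigr|\le |Y-Z|$.

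For (1), I would set $F(X):=|X|^{p-1}X$ for $X\in \brn$ and use the fundamental theorem of calculus to write
$$F(Y)-F(Z)=\int_0^1 DF(X_\theta)(Y-Z)\,d\theta,\qquad X_\theta:=Z+\theta(Y-Z).$$
A direct computation gives the Jacobian $\partial_k F^j(X)=(p-1)|X|^{p-3}X^jX^k+|X|^{p-1}\delta_{jk}$, which as a symmetric matrix equals $(p-1)|X|^{p-1}\widehat X\widehat X^{\mathrm T}+|X|^{p-1}I$ with $\widehat X:=X/|X|$, and therefore has operator norm at most $p|X|^{p-1}$. Taking Euclidean norms in the integral representation yields $|F(Y)-F(Z)|\le p\,|Y-Z|\int_0^1 |X_\theta|^{p-1}\,d\theta$; since $|X_\theta|\le (1-\theta)|Z|+\theta|Y|\le \max\{|Y|,|Z|\}$, the integral is bounded by $\max\{|Y|,|Z|\}^{p-1}\le |Y|^{p-1}+|Z|^{p-1}$, which gives (1).

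For (2), I would apply the scalar mean value theorem to $s\mapsto s^p$ on the interval between $|Z|$ and $|Y|$, obtaining $\bigl||Y|^p-|Z|^p\bigr|=p\,c^{p-1}\bigl||Y|-|Z|\bigr|$ for some $c$ between $\min\{|Y|,|Z|\}$ and $\max\{|Y|,|Z|\}$. Then the bound $c^{p-1}\le \max\{|Y|,|Z|\}^{p-1}$ together with the reverse triangle inequality $\bigl||Y|-|Z|\bigr|\le |Y-Z|$ immediately gives (2).

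The only mild subtlety, confined to (1), is the behaviour of $DF$ near the origin when $1<p<2$, where the entries of the Jacobian fail to be continuous at $X=0$. This is not a genuine obstacle: the pointwise bound $\|DF(X)\|_{\mathrm{op}}\le p|X|^{p-1}$ holds wherever $X\neq 0$, and the scalar integral $\int_0^1 |X_\theta|^{p-1}\,d\theta$ of a nonnegative power of an affine function is manifestly finite, so the argument goes through verbatim after, if desired, a density argument approximating $Y$ and $Z$ by points whose connecting segment avoids the origin.
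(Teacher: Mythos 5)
Your proof is correct, and it diverges from the paper's in instructive ways, so a comparison is worth recording. For part (1) you follow the same core computation as the paper --- the fundamental theorem of calculus along the segment $X_\theta:=Z+\theta(Y-Z)$ --- but you package the integrand as the operator-norm bound $\|DF(X)\|_{\mathrm{op}}\le p|X|^{p-1}$ (eigenvalues $p|X|^{p-1}$ along $\widehat{X}:=X/|X|$ and $|X|^{p-1}$ on the orthogonal complement), which is tidier than the paper's term-by-term estimate of the two integrals. Where you truly part ways is the degenerate configuration in which the segment crosses the origin: the paper treats it by an explicit algebraic computation, writing $Z=-\alpha Y$ with $\alpha=\theta/(1-\theta)$ and invoking $1+\alpha^p\le(1+\alpha^{p-1})(1+\alpha)$, whereas you dispose of it by integrability/density. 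Your dismissal is legitimate, and can even be made sharper than you state: if $X_{\theta_0}=0$ for some $\theta_0\in(0,1)$, then $Z=-\theta_0(Y-Z)$, hence $X_\theta=(\theta-\theta_0)(Y-Z)$ for \emph{all} $\theta$, and $\theta\mapsto F(X_\theta)=|\theta-\theta_0|^{p-1}(\theta-\theta_0)\,|Y-Z|^{p-1}(Y-Z)$ is $C^1$ on $[0,1]$ for $p>1$, so the integral representation holds verbatim with no approximation needed; alternatively, continuity of both sides of (1) in $(Y,Z)$ justifies the density argument, with the caveat that for $n=1$ and $Y$, $Z$ of opposite signs the segment cannot be perturbed off the origin, so in that case it is the direct argument, not density, that applies. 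For part (2) your route is genuinely different and strictly simpler: the scalar mean value theorem for $s\mapsto s^p$ between $|Z|$ and $|Y|$, combined with the reverse triangle inequality $\left||Y|-|Z|\right|\le|Y-Z|$, gives the bound in two lines and avoids entirely the paper's integration along the segment in $\brn$ together with its separate origin-crossing case; the paper's method has only the marginal virtue of being uniform with its proof of (1), while yours isolates the fact that (2) depends on $Y$, $Z$ only through their norms.
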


\begin{proof}
When $Y=0$ or $Z=0$ or $Y=Z$, the results are trivial.
We assume $Y\neq0$, $Z\neq0$ and $Y\neq Z$.

(1) 
If there is not $\theta$ such that $0<\theta<1$ and $Z+\theta(Y-Z)=0$,
then we have 
\begin{eqnarray*}
&& |Y|^{p-1}Y-|Z|^{p-1}Z
\\
&=&
\int_0^1 \frac{d}{d\theta} 
\left(
|Z+\theta(Y-Z)|^{p-1} 
\cdot (Z+\theta(Y-Z))
\right) 
d\theta
\\
&=&
(p-1)\int_0^1
|Z+\theta(Y-Z)|^{p-3}(Z+\theta(Y-Z))\cdot (Y-Z) 
(Z+\theta(Y-Z))
d\theta
\\
&& 
+
\int_0^1
|Z+\theta(Y-Z)|^{p-1}(Y-Z) d\theta.
\end{eqnarray*}
Thus, we have 
\begin{eqnarray*}
\left|
|Y|^{p-1}Y-|Z|^{p-1}Z
\right|
&\le& 
p\int_0^1 |Z+\theta(Y-Z)|^{p-1}|Y-Z| d\theta 
\\
&\le& 
p\max\{ |Y|^{p-1}, |Z|^{p-1} \} \, |Y-Z|.
\end{eqnarray*}
Let us consider the case 
that there is $\theta$ such that $0<\theta <1$ and $Z+\theta(Y-Z)=0$.
For this $\theta$, we put $\alpha:=\theta/(1-\theta)>0$. 
Since we have $Z=-\alpha Y$ and $Y-Z=(1+\alpha)Y$, 
we have 
\[
|Y|^{p-1}Y-|Z|^{p-1}Z
=
|Y|^{p-1}Y(1+\alpha^p)
=
|Y|^{p-1}
\frac{1+\alpha^p}{1+\alpha}(Y-Z).
\]
Since we have $1+\alpha^p\le (1+\alpha^{p-1})(1+\alpha)$, 
we have 
\[
\left|
|Y|^{p-1}Y-|Z|^{p-1}Z
\right|
\le 
|Y|^{p-1}(1+\alpha^{p-1})
|Y-Z|
=
(|Y|+|Z|)^{p-1}|Y-Z|.
\]
So that, we obtain the required result.

(2) 
If there is not $\theta$ such that $0<\theta<1$ and $Z+\theta(Y-Z)=0$,
then we have 
\begin{eqnarray*}
|Y|^{p}-|Z|^p
&=&
\int_0^1 \frac{d}{d\theta} \left(|Z+\theta(Y-Z)|^2\right)^{p/2} d\theta
\\
&=&
p\int_0^1 |Z+\theta(Y-Z)|^{p-2} 
(Z+\theta(Y-Z))\cdot (Y-Z) d\theta.
\end{eqnarray*}
Thus, we have 
\[
\left|
|Y|^{p}-|Z|^p
\right|
\le
p\left( \max\{|Y|,|Z|\} \right)^{p-1} |Y-Z|.
\]
Let us consider the case 
that there is $\theta$ such that $0<\theta <1$ and $Z+\theta(Y-Z)=0$.
For this $\theta$, we put $\alpha:=\theta/(1-\theta)>0$. 
We have $Z=-\alpha Y$ and 
\[
|Y|^{p}-|Z|^p
=
|Y|^p(1-\alpha^p)
\]
and 
\[
1-\alpha^p
=
\int_0^1 \frac{d}{d\tau}(\alpha+\tau(1-\alpha))^p d\tau
=
p\int_0^1 (\alpha+\tau(1-\alpha))^{p-1} d\tau \cdot (1-\alpha).
\]
Thus, we have 
\[
\left||Y|^{p}-|Z|^p\right|
\le 
p |Y|^p \left(\max\{1,\alpha\}\right)^{p-1} |1-\alpha|.
\]
Since we have $|Y|\max\{1,\alpha\}=\max\{|Y|,|Z|\}$ and 
$|Y| \cdot|1-\alpha|=||Y|-|Z||\le|Y-Z|$, 
we obtain the required result.
\end{proof}

\newsection{Proof of Theorem \ref{Thm-1}}
\label{Section-Thm-1}
We have $A>0$, $D_t A<0$ and $T_0<0$ 
by the assumption $a_1>0$ and $\sigma>-1+2/n$.
We prove Theorem \ref{Thm-1} only for the case 
$f(Y):=\lambda |Y|^{p-1}Y$ 
since the case $f(Y):=\lambda |Y|^p$ is proved analogously 
by the use of (2) instead of (1) in Lemma \ref{Lem-11}.
Put 
\beq
\label{Def-U-H}
U_\ast(|Y|^2):=\lambda|Y|^{p-1}.
\eeq
We regard the solution of the Cauchy problem \eqref{Cauchy-Y} 
as the fixed point of the operator $\Psi$ defined by 
\beq
\label{Def-Psi}
\Psi(Y)^j:=
\rho_0(t)Y_0^j+\rho_1(t) Y_1^j
-
\int_0^t \rho_{12}(t,s) f(Y)^j(s) ds
\eeq
for $1\le j\le n$, 
where $\rho_0$, $\rho_1$ and $\rho_{12}$ are the functions 
in Lemma \ref{Lem-7} 
with $\widetilde{a}(t):=A(t)$.

(1) 
Put $A_q:=A^{1/2-1/q} (-D_t A)^{1/q}$ for $2\le q\le \infty$.
By the H\"older inequality, we have 
\beq
\|A_q Y\|_{L^q((0,T))}
\le \|\sqrt{A}\, Y\|_\infty^{1-2/q} \|\sqrt{-D_t A}\, Y\|_2^{2/q}.
\eeq
Since we have $A=n(\sigma+1-2/n)q_0^2/8$ 
and 
$-D_t A=n^2(\sigma+1-2/n)(\sigma+1)q_0^3/16$ 
by (1) and (3) in Lemma \ref{Lem-9}, 
we have 
\beq
\label{Proof-Thm-1-100}
A_q=C q_0^{1+1/q}
\eeq 
for some constant $C>0$,
where $q_0>0$ by $a_1>0$ and (1) in Lemma \ref{Lem-8}.
Let $q_\ast$ be a real number which satisfies 
\beq
\label{Proof-Thm-1-500}
\max\left\{0,1-\frac{p}{2}\right\}\le \frac{1}{q_\ast}\le 1.
\eeq
Then there exist $q_1$ and $q_2$ with $2\le q_1,q_2\le \infty$ and 
\beq
\label{Proof-Thm-1-550}
\frac{1}{q_\ast}=1-\frac{p-1}{q_1}-\frac{1}{q_2}.
\eeq
Especially, \eqref{Proof-Thm-1-500} is satisfied for $q_\ast=\infty$ and $p\ge2$.
By $f(Y)=\lambda|Y|^{p-1}Y$, we have 
\beq
\label{Proof-Thm-1-300}
\|f(Y)\|_{L^1((0,T))}
\le |\lambda| I \|A_{q_1}Y\|_{L^{q_1}((0,T))}^{p-1} \|A_{q_2}Y\|_{L^{q_2}((0,T))}
\le |\lambda| I \|Y\|_{X}^p
\eeq
if $q_\ast$ satisfies $1\le q_\ast\le \infty$ and \eqref{Proof-Thm-1-550},
where we have put 
\beq
\label{Proof-Thm-1-I}
I:=\|A_{q_1}^{-p+1}A_{q_2}^{-1}\|_{L^{q_\ast}((0,T))}.
\eeq

By \eqref{Proof-Thm-1-100}, we have
\beq
\label{Proof-Thm-1-700}
I\lesssim \|q_0^{-p-1+1/q_\ast}\|_{L^{q_\ast}((0,T))}=:\two.
\eeq
By Lemma \ref{Lem-10}, we have 
\[
\|\Psi(Y)\|_X\lesssim |D_t Y(0)|+\sqrt{A(0)}|Y(0)|+\|f(Y)\|_{L^1((0,T))},
\]
which yields 
\[
\|\Psi(Y)\|_X\le C_0 D+C\two R^p
\]
for any $Y\in X(T,R)$ 
by \eqref{Proof-Thm-1-300} and \eqref{Proof-Thm-1-700},
where $C_0>0$ and $C>0$ are constants independent of $Y$.
Thus, the operator $\Psi$ maps $X(T,R)$ into itself if $R>0$ satisfies 
\beq
\label{Proof-Thm-1-1000}
R\ge 2C_0D, 
\ \ 
2C\two R^{p-1}\le 1.
\eeq

Put  
$C':=
\left({2a_1}/{a_0}\right)^{-p-1+1/q_\ast}
\left\{
{1}/{(p+1)q_\ast}
\right\}^{1/q_\ast}$.
We have $q_0=2a_1/a_0\left(1-t/T_0\right)$ 
by (1) in Lemma \ref{Lem-8}.
Since we have 
\[
\two=
C'\cdot
\left[
-T_0
\left\{\left(1-\frac{T}{T_0}\right)^{(p+1)q_\ast}-1\right\}
\right]^{1/q_\ast}
\]
when $q_\ast\neq\infty$, 
the condition $2C\two R^{p-1}\le 1$ is rewritten as 
\beq
\label{Proof-Thm-1-2000}
T\le -T_0
\left[
\left\{
1-\frac{1}{T_0(2C C' R^{p-1})^{q_\ast} }
\right\}^{1/(p+1)q_\ast}
-1
\right].
\eeq
Since we have $\two=C'\left(1-{T}/{T_0}\right)^{p+1}$ when $q_\ast=\infty$, 
the condition $2C\two R^{p-1}\le 1$ is rewritten as 
\beq
\label{Proof-Thm-1-3000}
T\le -T_0
\left\{
(2C C' R^{p-1})^{-1/(p+1)} -1
\right\} 
\ \ 
\mbox{and}
\ \ R\le \left(\frac{1}{2CC'}\right)^{1/(p-1)}
\eeq
when $q_\ast=\infty$.

Since we have 
$f(Y)-f(Z)=\lambda(|Y|^{p-1}Y-|Z|^{p-1}Z)$, 
we have
$$
|f(Y)-f(Z)|\le|\lambda|p(|Y|^{p-1}+|Z|^{p-1})|Y-Z|
$$ 
by Lemma \ref{Lem-11}. 
We have 
\begin{eqnarray}
&& \|f(Y)-f(Z)\|_{L^1((0,T))}
\nonumber\\
&\lesssim& |\lambda| I 
\left(
\|A_{q_1}Y\|_{L^{q_1}((0,T))}
+
\|A_{q_1}Z\|_{L^{q_1}((0,T))}
\right)^{p-1} \|A_{q_2}(Y-Z)\|_{L^{q_2}((0,T))}
\nonumber\\
&\lesssim& 
|\lambda| I 
\max\{ \|Y\|_X, \|Z\|_X \}^{p-1} \|Y-Z\|_X
\label{Proof-Thm-1-4000}
\end{eqnarray}
by the similar argument to derive \eqref{Proof-Thm-1-300}.
Since we have 
\[
\|\Psi(Y)-\Psi(Z)\|_X
\lesssim
\|f(Y)-f(Z)\|_{L^1((0,T))}
\]
by Lemma \ref{Lem-10},
there exist a constant $C>0$ such that  
\beq
\label{Proof-Thm-1-5000}
\|\Psi(Y)-\Psi(Z)\|_X
\le
CIR^{p-1} \|Y-Z\|_X
\eeq
holds for any $Y,Z\in X(T,R)$. 
Thus, $\Psi$ is a contraction mapping on $X(T,R)$ under the conditions 
\eqref{Proof-Thm-1-1000}.
By the Banach fixed point theorem, $\Psi$ has a unique fixed point $Y\in X(T,R)$.

(2) 
Let $Y$ be the fixed point of $\Psi$ obtained in (1).
Since $A\in C([0,T))$, and $f(Y)\in L^1((0,T))$ 
by \eqref{Proof-Thm-1-300} and $Y\in X(T)$, 
we have $Y\in C([0,T))$ by (6) in Lemma \ref{Lem-7}.
Thus, we have $f(Y)=\lambda|Y|^{p-1}Y\in C([0,T))$, 
which yields $Y\in C^1([0,T))$ by (6) in Lemma \ref{Lem-7}.

(3) 
Let $Y$ be the solution in $C^1([0,T))\cap X(T)$ of \eqref{Cauchy-Y}. 
Let $Z$ be another solution in $C^1([0,T))\cap X(T)$ of \eqref{Cauchy-Y} 
with $Z(0)=Y_0$ and $D_t Z(0)=Y_1$.
Put $T_\ast:=\sup\{t\in [0,T);\ Y(t)=Z(t)\}$.
We show $T_\ast=T$ by which the uniqueness of the solution $Y=Z$ follows. 
Assume $T_\ast<T$.
Let $\varepsilon>0$ be a small number such that $T_\ast+\varepsilon<T$.
Put the interval $J:=(T_\ast, T_\ast+\varepsilon)$.
Let $\|\cdot\|_{X(J)}$ be the norm defined by 
\eqref{Def-X-Norm} with the interval $(0,T)$ replaced by $J$.  
By the analogous argument on 
\eqref{Proof-Thm-1-4000} and \eqref{Proof-Thm-1-5000}, 
we have 
\begin{multline*}
\|Y-Z\|_{X(J)}
\le
C_0\left\{
|D_t(Y-Z)(T_\ast)|+\sqrt{A(T_\ast)} |(Y-Z)(T_\ast)|
\right\}
\\
+
C|\lambda| I(J)  
\max\{ \|Y\|_{X(J)}, \|Z\|_{X(J)} \}^{p-1} \|Y-Z\|_{X(J)},
\end{multline*}
where $I(J)$ is defined by \eqref{Proof-Thm-1-I} with the interval $(0,T)$ 
replaced by $J$.
Since 
$|D_t(Y-Z)(T_\ast)|=|(Y-Z)(T_\ast)|=0$ 
and  
$C|\lambda| I(J)  
\max\{ \|Y\|_{X(J)}, \|Z\|_{X(J)} \}^{p-1}<1$ 
for sufficiently small $\varepsilon>0$ 
by the continuity of 
$Y,Z\in C^1([0,T))$, 
\eqref{Proof-Thm-1-1000},
$\|Y\|_{X(J)}\le R$ 
and $\|Z\|_{X(J)} \to \|Y\|_{X(J)}$ as $\varepsilon \searrow0$,
we obtain $\|Y-Z\|_{X(J)}=0$ for sufficiently small $\varepsilon>0$.
Thus, we have $Y=Z$ on $[T_\ast, T_\ast+\varepsilon)$, 
which contradicts to the definition of $T_\ast$.
So that, we obtain $T_\ast=T$ as required.

(4) 
By the energy estimate Lemma \ref{Lem-10}, we have 
\[
e^0(t)+\int_0^t e^1(s) ds=e^0(0)
\] 
for $0\le t<T$.
Since $D_t A\le 0$ and $\lambda\ge0$, we have $\int_0^t e^1(s)ds\ge0$ and 
\[
\frac{1}{2} |D_t Y(t)|^2+\frac{1}{2} A(t)|Y(t)|^2\le e^0(t)\le e^0(0).
\]
So that, since $\sqrt{A(t)} |Y(t)|+|D_t Y(t)|$ is uniformly bounded for $t\in [0,T)$, 
we can obtain the global solution on $[0,T_1)$ connecting the local solution 
on short intervals in $[0,T_1)$
since the time interval for the existence of the local solution starting from $t$ 
with $0\le t<T_1$ can be uniformly taken dependent on $e^0(0)$ by the estimate 
\eqref{Thm-1-1000}.

\newsection{Proof of Theorem \ref{Thm-2}}
\label{Section-Thm-2}
We note $A(\cdot)=0$ holds when $a_1=0$ and $\sigma\in \br$, 
or $a_1\in \br$ and $\sigma=-1+2/n$ by Lemma \ref{Lem-9}.

(1) 
Let us consider the operator $\Psi$ defined by \eqref{Def-Psi}.
By the elementary equation 
$\Psi(Y)(t)=Y_0+\int_0^t D_t \Psi(Y)(\tau)d\tau$, 
we have 
\beq
\label{Proof-Thm-2-1000}
\|\Psi(Y)\|_{L^\infty((0,T))}
\le |Y_0|+T\|D_t\Psi(Y)\|_{L^\infty((0,T))} 
\le |Y_0|+TR_1
\eeq
for $Y\in X(T,R_0,R_1)$.
We have 
\beq
\label{Proof-Thm-2-1500}
\|f(Y)\|_{L^1((0,T))}
\le T\|f(Y)\|_{L^\infty((0,T))} 
\le |\lambda|T\|Y\|_{L^\infty((0,T))}^p\le |\lambda|TR_0^p
\eeq
and 
\begin{eqnarray}
\|f(Y)-f(Z)\|_{L^1((0,T))}
&\le& 
T\|f(Y)-f(Z)\|_{L^\infty((0,T))} 
\nonumber\\
&\le& 
|\lambda|p T
\left(\|Y\|_{L^\infty((0,T))}^{p-1}+\|Z\|_{L^\infty((0,T))}^{p-1}\right) \|Y-Z\|_{L^\infty((0,T))}
\nonumber\\
&\le& |\lambda|p TR_0^{p-1}\|Y-Z\|_X
\label{Proof-Thm-2-1700}
\end{eqnarray}
by Lemma \ref{Lem-11}.
We have 
\beq
\label{Proof-Thm-2-2000}
\|D_t\Psi(Y)\|_{L^\infty((0,T))} 
\lesssim |Y_1|+\|f(Y)\|_{L^1((0,T))}
\lesssim |Y_1|+|\lambda| TR_0^p
\eeq
by (1) in Lemma \ref{Lem-10}, $A(\cdot)=0$ and \eqref{Proof-Thm-2-1500}.
Since $\Psi(Y)$ and $\Psi(Z)$ satisfy 
\[
D_t^2\left(\Psi(Y)-\Psi(Z)\right)+f(Y)-f(Z)=0,
\]
we have 
\beq
\label{Proof-Thm-2-3000}
\|D_t(\Psi(Y)-\Psi(Z))\|_{L^\infty((0,T))}
\lesssim
\|f(Y)-f(Z)\|_{L^1((0,T))}
\le |\lambda| TR_0^{p-1}\|Y-Z\|_X
\eeq
similarly to \eqref{Proof-Thm-2-2000} 
by \eqref{Proof-Thm-2-1700}.
Moreover, 
by 
$\left(\Psi(Y)-\Psi(Z)\right)(t)=
\int_0^t D_t(\Psi(Y)-\Psi(Z))(\tau)d\tau$, 
we have 
\begin{eqnarray}
\|\Psi(Y)-\Psi(Z)\|_{L^\infty((0,T))}
&\le& 
T\|D_t(\Psi(Y)-\Psi(Z))\|_{L^\infty((0,T))}
\nonumber\\
&\lesssim& 
|\lambda| T^2 R_0^{p-1} \|Y-Z\|_X.
\label{Proof-Thm-2-4000}
\end{eqnarray}
By  
\eqref{Proof-Thm-2-1000},
\eqref{Proof-Thm-2-2000}, 
\eqref{Proof-Thm-2-3000} and 
\eqref{Proof-Thm-2-4000}, 
we have
\[
\|\Psi(Y)\|_{L^\infty((0,T))} 
\le |Y_0|+TR_1 
\le R_0,
\]
\[
\|D_t\Psi(Y)\|_{L^\infty((0,T))} 
\le C_0|Y_1|+CTR_0^p 
\le R_1,
\]
\[
\|D_t(\Psi(Y)-\Psi(Z))\|_{L^\infty((0,T))} 
\le CTR_0^{p-1}\|Y-Z\|_X 
\le \frac{1}{2}\|Y-Z\|_X,
\]
and 
\[
\|\Psi(Y)-\Psi(Z)\|_{L^\infty((0,T))} 
\le CT^2 R_0^{p-1}\|Y-Z\|_X 
\le \frac{1}{2}\|Y-Z\|_X
\]
for some constants $C_0>0$ and $C>0$ if 
\begin{eqnarray*}
&&R_0\ge 2|Y_0|,\ \ 2TR_1\le R_0,\ \ R_1\ge 2C_0|Y_1|, 
\\
&&2CTR_0^p\le R_1, \ \ 2CTR_0^{p-1}\le 1, \ \ 2CT^2R_0^{p-1}\le 1.
\end{eqnarray*}
Since these conditions are satisfied under the condition 
\eqref{Thm-2-T}, 
$\Psi$ is a contraction mapping on $X(T,R_0,R_1)$.
The solution is obtained as the fixed point of $\Psi$.

(2) 
When $a_1<0$ and $\sigma=-1+2/n$, 
we have $T_1=T_0>0$.
By the argument in (1), we obtain the global solution 
if \eqref{Thm-2-T} holds 
with $T$ replaced by $T_0$, 
which is satisfied if $R_0=2T_0R_1$ and $R_0$ is sufficiently small.
So that, we obtain the global solution if $|Y_0|$ and $|Y_1|$ are sufficiently small.

The results of (2), (3) and (4) in Theorem \ref{Thm-1} follow from the similar proofs for 
Theorem \ref{Thm-1}. 
Especially for (4), the energy estimate 
\[
\frac{1}{2}|D_t Y(t)|^2
+
\frac{\lambda}{p+1}|Y(t)|^{p+1}
=
\frac{1}{2}|Y_1|^2
+
\frac{\lambda}{p+1}|Y_0|^{p+1}
\]
by (2) in Lemma \ref{Lem-10} 
shows the boundedness of $Y(t)$ and $D_t Y(t)$ for $0\le t<T_1$ 
when $\lambda>0$, 
by which we obtain the global solution.
When $\lambda=0$, we also have the global solution 
since the differential equation in \eqref{Cauchy-Y} is linear for $Y$.

\newsection{Proof of Theorem \ref{Thm-3}}
\label{Section-Thm-3}
We note $A>0$, $D_t A>0$ and $T_0>0$ holds 
when $a_1<0$ and $\sigma>-1+2/n$ by Lemma \ref{Lem-9}.

(1) 
Let us consider the operator $\Psi$ defined by \eqref{Def-Psi}.
We have 
\[
\|A^{-1/2}f(Y)\|_{L^1((0,T))}
\le |\lambda| \|A^{-1/2}\|_{L^1((0,T))} \|Y\|_{L^\infty((0,T))}^p.
\]
Since $A=n(\sigma+1-2/n)q_0^2/8$ by (1) in Lemma \ref{Lem-9}, 
we have 
\[
\|A^{-1/2}\|_{L^1((0,T))}
\lesssim
\||q_0|^{-1}\|_{L^1((0,T))}
=
\frac{a_0}{2|a_1|}
\left\|1-\frac{t}{T_0}\right\|_{L^1((0,T))}
= 
\frac{a_0}{2|a_1|} T\left(1-\frac{T}{2T_0}\right)
\]
by Lemma \ref{Lem-8}.
Since we have 
\[
\|\Psi(Y)\|_{X'(T)}
\lesssim D'+\|A^{-1/2}f(Y)\|_{L^1((0,T))}
\]
by Lemma \ref{Lem-12}, 
there exist constants $C_0>0$ and $C>0$ such that  
\[
\|\Psi(Y)\|_{X'(T)}
\le C_0 D'+\frac{C|\lambda|a_0}{|a_1|}T\left(1-\frac{T}{2T_0}\right)R^p
\]
holds for any $Y\in X'(T,R)$,
where $D'$ is defined by \eqref{Def-D'}.
Thus, we have 
$\|\Psi(Y)\|_{X'(T)}\le R$ if $T$ and $R$ satisfy 
\beq
\label{Proof-Thm-3-1000}
R\ge 2C_0D',
\ \ 
\frac{2C|\lambda|a_0}{|a_1|}T\left(1-\frac{T}{2T_0}\right)R^{p-1}\le 1.
\eeq
Similarly to \eqref{Proof-Thm-1-4000} and \eqref{Proof-Thm-1-5000}, 
we are able to show 
\[
\|\Psi(Y)-\Psi(Z)\|_{X'(T)}\le \frac{1}{2}\|Y-Z\|_{X'(T)}
\]
for any $Y,Z\in X'(T,R)$ under the conditions \eqref{Proof-Thm-3-1000}.
So that, $\Psi$ is a contraction mapping on $X'(T,R)$, 
and the solution is obtained as its fixed point.

(2) 
Since $T(1-T/2T_0)\le T\le T_0$ by $T_0>0$, 
the second condition in \eqref{Proof-Thm-3-1000} is satisfied if 
\[
\frac{2C|\lambda|a_0}{|a_1|}T_0R^{p-1}
\,
\left(
=
\frac{4C|\lambda|}{n(1+\sigma)H^2}R^{p-1}
\right)\le 1.
\]
Thus, the conditions \eqref{Proof-Thm-3-1000} are satisfied with $T=T_0$ 
if $R>0$ and $D'>0$ are sufficiently small.
Namely, we obtain the global solution for small data.

(3) 
For the solution $Y\in X'(T)$,  
since $f(Y)\in L^1((0,T))$ by 
\[
\|f(Y)\|_{L^1((0,T))}\le T\|f(Y)\|_{L^\infty((0,T))}
\] 
and 
\[
\|f(Y)\|_{L^\infty((0,T))}
\le |\lambda|\|Y\|_{L^\infty((0,T))}^p 
\le |\lambda|\|Y\|_{X'(T)}^p ,
\]
we have $Y\in C([0,T))$ by (6) in Lemma \ref{Lem-7}.
Thus, we have $f(Y)\in C([0,T))$, 
and moreover $Y\in C^1([0,T))$ again 
by (6) in Lemma \ref{Lem-7}.

(4) The result follows from the analogous argument in the proof of (3) in Theorem \ref{Thm-1}.

(5) 
By the energy estimate Lemma \ref{Lem-12}, we have 
$e^0(t)+\int_0^t e^1(s) ds=e^0(0)$ for $0\le t< T\le T_0$.
Since $D_t A >0$ and $\lambda\ge0$, 
we have $\int_0^t e^1(s)ds\ge0$ and 
\[
\frac{1}{2} |A^{-1/2}(t) D_t Y(t)|^2+\frac{1}{2} |Y(t)|^2
\le e^0(t)\le e^0(0).
\]
Since $D'(t):=|Y(t)|+A(t)^{-1/2} |D_t Y(t)|$ is uniformly bounded 
on $[0,T)$, 
the existence time $T$ for the local solution can be taken uniformly 
under the condition  
\eqref{Thm-3-1000} 
with $a_0$, $a_1$ and $D'$ replaced by $a(t)$, $D_ta(t)$ and $D'(t)$.
So that, we can obtain the global solution on $[0,T_0)$ connecting the local solutions  
on short intervals in $[0,T_0)$.

\newsection{Proof of Theorem \ref{Thm-4}}
\label{Section-Thm-4}
We have $A=-H^2$ by $a(t)=e^{Ht}$ and \eqref{A}.

(1) 
Since the differential equation in \eqref{Cauchy-Y} is rewritten as 
$D_t^2Y+(\lambda-H^2)Y=0$ when $p=1$, 
the solution $Y$ is given by \eqref{Thm-4-1000} with \eqref{Thm-4-2000}.

(2) 
The differential equation in \eqref{Cauchy-Y} is rewritten as 
\beq
\label{Proof-Thm-4-1000}
D_t^2Y+(\lambda R^{p-1}-H^2)Y=0
\eeq
by $R=|Y|$.
Let $Y^3=\cdots=Y^n=0$.

(i) 
When $R$ satisfies $\lambda R^{p-1}-H^2>0$, 
then $Y$ is given by \eqref{Thm-4-3000}.
Here, $\lambda R^{p-1}-H^2>0$ holds if and only if 
$\lambda>0$ and  $R>(H^2/\lambda)^{1/(p-1)}$.

(ii) 
Let $R$ satisfy $\lambda R^{p-1}-H^2=0$, 
which holds if and only if 
$\lambda=H=0$, or $\lambda>0$ and $R=(H^2/\lambda)^{1/(p-1)}$.
Since \eqref{Proof-Thm-4-1000} is rewritten as 
$D_t^2Y=0$, 
$Y$ is given by $Y^j=B^j+C^jt$ for some constants $B^j$ and $C^j$ 
for $j=1,2$.
Since $(B^1)^2+(B^2)^2=R^2$ and $C^1=C^2=0$ by $|Y|=R$, 
we obtain $Y=(B^1,B^2,0,\cdots,0)$ with 
$B^1=Y^1_0$, $B^2=Y^2_0$, $Y^1_1=C^1=0$, $Y^2_1=C^2=0$ 
and 
$(Y^1_0)^2+(Y^2_0)^2=R^2$.

(iii) 
Let $R$ satisfy $\lambda R^{p-1}-H^2<0$.
Since \eqref{Proof-Thm-4-1000} is rewritten as 
$D_t^2Y-(H^2-\lambda R^{p-1})Y=0$, 
$Y$ is given by 
\[
Y^j=B^j e^{ \sqrt{ H^2-\lambda R^{p-1} }t }
+
C^j e^{ -\sqrt{ H^2-\lambda R^{p-1} }t }
\]
for some constants $B^j$ and $C^j$ for $j=1,2$.
We have $B^1=B^2=0$ 
by $|Y|\to\infty$ as $t\to\infty$ 
if $B^1\neq0$ or $B^2\neq0$  
which contradicts to $|Y|=R$.
Moreover, $R=0$ must hold by 
$|Y|=\sqrt{(C^1)^2+(C^2)^2}e^{-\sqrt{H^2-\lambda R^{p-1}}t}\to0$ as $t\to\infty$.
Thus, $Y=0$ is only allowed.

\newsection{Proof of Theorem \ref{Thm-5}}
\label{Section-Thm-5}
Since we have 
\[
|Y(t)|=e^{Ht} |X(t)|
\ \ \mbox{and}\ \ 
A=-H^2
\]
by \eqref{X-Y} and \eqref{Y-A}, 
where $|X|:=\left\{\sum_{j=1}^n (X^j)^2\right\}^{1/2}$ by \eqref{Def-Modulo},
the Cauchy problem \eqref{Cauchy-Y-Variant} 
is rewritten as 
\beq
\label{Cauchy-X-Variant}
\begin{cases}
D_t^2X(t)+2H D_t X+\lambda e^{(p-1)H t} |X(t)|^p=0 & \mbox{for}\ \ t\ge0,
\\
X(0)=X_0,
\ \ 
D_t X(0)=X_1
\end{cases}
\eeq
when $a(t)=e^{Ht}$ by Lemma \ref{Lem-6}.
We say that $X$ is a global weak solution if $X$ satisfies 
\begin{multline}
\label{Def-Weak-X}
-X_1\phi(0)+X_0D_t\phi(0)-2H X_0\phi(0)
\\
+\int_0^\infty 
X(t)D_t^2\phi-2H X(t) D_t\phi(t)
+\lambda e^{(p-1)Ht}|X(t)|^p \phi(t) 
dt=0
\end{multline}
for any $\phi\in C^2_0([0,\infty))$ 
which is equivalent to \eqref{Def-Weak}.
So that, Theorem \ref{Thm-5} is equivalent to the following Theorem \ref{Thm-5-X} 
since 
$X_0=Y_0$ and $X_1=-HY_0+Y_1$ 
when $a(t)=e^{Ht}$ under \eqref{X-Y}.
It suffices to show Theorem \ref{Thm-5-X} to prove Theorem \ref{Thm-5}.

\begin{theorem}
\label{Thm-5-X}
Let $n=1$, $\sigma=-1$ in \eqref{a}, $H\ge0$, $\lambda>0$, $1<p<\infty$.
If $X_1+2H X_0\le 0$, 
then any global weak solution $X$ of \eqref{Cauchy-X-Variant} 
must satisfy $X=0$.
\end{theorem}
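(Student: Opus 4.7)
The plan is to apply the test function method of Mitidieri--Pohozaev to the weak formulation \eqref{Def-Weak-X}, choosing a test function whose initial data collapses precisely to the sign hypothesis $X_1 + 2HX_0 \leq 0$. Fix a smooth cutoff $\eta \in C^2([0,\infty))$ with $\eta(s) = 1$ on $[0,1/2]$ and $\eta(s) = 0$ on $[1,\infty)$, and for $T > 0$ and a parameter $q \geq 2p/(p-1)$ set $\phi_T(t) := \eta(t/T)^q$. Then $\phi_T \in C^2_0([0,\infty))$ is supported in $[0,T]$, satisfies $\phi_T(0) = 1$ and $D_t\phi_T(0) = 0$, and its derivatives $D_t\phi_T$, $D_t^2\phi_T$ are supported in $[T/2, T]$. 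Substituting $\phi = \phi_T$ into \eqref{Def-Weak-X}, the boundary contribution becomes $X_1 + 2HX_0 \leq 0$, so after rearranging
\[
I_T := \int_0^\infty \lambda e^{(p-1)Ht} |X|^p \phi_T\, dt \leq \int_{T/2}^T |X|\bigl(|D_t^2\phi_T| + 2H|D_t\phi_T|\bigr)\, dt.
\]

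Next, apply Hölder's inequality with conjugate exponents $p$ and $p' = p/(p-1)$, factoring the right-hand integrand as the product $|X|\bigl(\lambda e^{(p-1)Ht}\phi_T\bigr)^{1/p} \cdot \bigl(\lambda e^{(p-1)Ht}\phi_T\bigr)^{-1/p}\bigl(|D_t^2\phi_T| + 2H|D_t\phi_T|\bigr)$. The crucial algebraic identity $(p-1)p'/p = 1$ collapses the composite exponential weight to $e^{-Ht}$, yielding $I_T \leq \lambda^{-1/p} I_T^{1/p} K_T^{1/p'}$ with
\[
K_T := \int_{T/2}^T e^{-Ht} \phi_T^{-p'/p}\bigl(|D_t^2\phi_T| + 2H|D_t\phi_T|\bigr)^{p'}\, dt.
\]
Solving the self-improving inequality gives $I_T \leq \lambda^{-1/(p-1)} K_T$.

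The main computational step is to show $K_T \to 0$ as $T \to \infty$. Using the standard bounds $|D_t^k\phi_T| \leq C_q T^{-k}\eta(t/T)^{q-k}$ and the choice $q \geq 2p'$ (so that $\phi_T^{-p'/p}$ is compensated, leaving a nonnegative power of $\eta \leq 1$), a direct computation gives
\[
K_T \leq C_q\bigl(T^{-2p'} + H^{p'}T^{-p'}\bigr)\int_{T/2}^T e^{-Ht}\, dt.
\]
When $H > 0$, the last integral is bounded by $H^{-1}e^{-HT/2}$, so $K_T$ decays exponentially; when $H = 0$, $K_T \leq C_q T^{1-2p'}$ decays polynomially since $p' > 1$. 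Finally, since $\phi_T(t) \to 1$ pointwise as $T \to \infty$ and the integrand is nonnegative, Fatou's lemma gives $\int_0^\infty \lambda e^{(p-1)Ht}|X|^p\, dt \leq \liminf_{T\to\infty} I_T = 0$, which forces $X = 0$ almost everywhere, as required.

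The main obstacle is orchestrating the Hölder split so that the exponential weights cancel cleanly; the identity $(p-1)p'/p = 1$ is what dictates the specific factorization chosen, and the parameter $q \geq 2p/(p-1)$ is exactly what is needed to tame the degenerate factor $\phi_T^{-p'/p}$ on the support of $D_t^k\phi_T$. The sign hypothesis $X_1 + 2HX_0 \leq 0$ is used only once, to annihilate the boundary term from the weak formulation.
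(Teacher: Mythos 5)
Your proposal is correct and follows essentially the same Mitidieri--Pohozaev test-function argument as the paper: the sign hypothesis $X_1+2HX_0\le 0$ annihilates the boundary term, H\"older's inequality with the cancellation $(p-1)p'/p=1$ produces the self-improving inequality $I_T\lesssim I_T^{1/p}K_T^{1/p'}$, and the decay of $K_T$ (exponential for $H>0$, polynomial of order $T^{1-2p'}$ for $H=0$) forces $I_T\to0$ and hence $X=0$. The only cosmetic difference is that you take $\phi_T=\eta(t/T)^q$ with $q\ge 2p'$ and a single H\"older application absorbing $|D_t^2\phi_T|+2H|D_t\phi_T|$ at once, whereas the paper takes $\phi=\eta_R^{p'}$ with the auxiliary condition $|D_t\eta|^2/\eta\lesssim1$ to secure $\phi\in C^2$ and estimates the two terms $J$ and $L$ separately.
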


\begin{proof}
Let $\eta\in C_0^\infty([0,\infty))$ be a non-negative function 
with 
$\eta(t)=1$ for $0\le t\le 1/2$, 
$\eta(t)=0$ for $t\ge1$, 
and $|D_t\eta(t)|^2/\eta(t) \lesssim 1$ for $1/2\le t\le 1$.
An example of $\eta$ is given by 
\[
\eta(t)
:=
\begin{cases}
1 & \mbox{if}\ \ 0\le t\le \frac{1}{2}, 
\\
1-\eta_0\int_{1/2}^t e^{-(s-1/2)^{-1}(1-s)^{-1} } ds
& \mbox{if}\ \ \frac{1}{2}<t<1, 
\\
0 & \mbox{if}\ \ t\ge1, 
\end{cases}
\]
where $\eta_0:=\left(\int_{1/2}^1 e^{-(s-1/2)^{-1}(1-s)^{-1} } ds\right)^{-1}$.
For $R>0$, put 
$\eta_R(t):=\eta(t/R)$ 
and 
$\phi:=\eta_R^{p'}$,
where $p'$ is defined by $1/p+1/p'=1$. 
We note $\phi\in C^2([0,\infty))$
by 
\begin{eqnarray*}
D_t \eta_R^{p'}(t)
&=&\frac{p'}{R} \eta_R^{p'-1}(t)D_t\eta\left(\frac{t}{R}\right),
\\
D_t^2 \eta_R^{p'}(t)
&=&
\frac{p'(p'-1)}{R^2} \eta_R^{p'-1}(t)
\cdot 
\frac{\left(D_t\eta(t/R)\right)^2}{\eta(t/R)}
+
\frac{p'}{R^2}\eta_R^{p'-1}(t)D_t^2\eta\left(\frac{t}{R}\right)
\end{eqnarray*}
and $|D_t\eta(t)|^2/\eta(t) \lesssim 1$ for $1/2\le t\le 1$.
The equation \eqref{Def-Weak-X} is rewritten as 
\beq
\label{IJL}
\lambda I=X_1+2H X_0-J+L
\eeq
by $\phi(0)=1$ and $D_t\phi(0)=0$,
where we have put 
\[
\begin{array}{l}
I:=\int_0^\infty e^{(p-1)Ht} |X(t)|^p\phi(t) dt,
\ \ 
J:=\int_0^\infty X(t) D_t^2 \phi(t) dt,
\\ 
L:=2H\int_0^\infty X(t)D_t\phi(t) dt.
\end{array}
\]
By $|D_t^2\eta_R^{p'}|\lesssim R^{-2}\eta_R^{p'-1} \chi_{[R/2,R]}$ 
and the H\"older inequality, 
we have 
\begin{eqnarray*}
|J|
&\lesssim& 
\frac{1}{R^2} \int_{R/2}^R |X(t)|\eta_R^{p'-1}(t) dt
\\
&\lesssim&
\frac{1}{R^2} 
\left\{\int_{R/2}^R e^{-Ht}  dt\right\}^{1/p'} 
\left\{\int_{R/2}^R e^{(p-1)Ht} |X(t)|^p \eta_R^{p'}(t) dt\right\}^{1/p} 
\\
&\lesssim&
\frac{J_\ast I^{1/p}}{R^2},
\end{eqnarray*}
where 
$\chi_{[R/2,R]}$ is the characteristic function on the interval $[R/2,R]$, 
and 
we have put 
\beq
\label{Proof-Thm-5-X-1000}
J_\ast:=\left\{\int_{R/2}^R e^{-Ht} dt\right\}^{1/p'}
\left(
\lesssim 
R^{1/p'}\max\left\{e^{-HR/2p'}, \ e^{-HR/p'} \right\}
\right).
\eeq
By $|D_t\eta_R^{p'}|\lesssim R^{-1}\eta_R^{p'-1} \chi_{[R/2,R]}$ 
and the H\"older inequality, 
we have 
\begin{eqnarray*}
|L|
&\lesssim& 
\frac{H}{R} \int_{R/2}^R |X(t)|\eta_R^{p'-1}(t) dt
\\
&\lesssim&
\frac{H}{R} 
\left\{\int_{R/2}^R e^{-Ht}  dt\right\}^{1/p'} 
\left\{\int_{R/2}^R e^{(p-1)Ht} |X(t)|^p \eta_R^{p'}(t) dt\right\}^{1/p} 
\\
&\lesssim&
\frac{H J_\ast I^{1/p}}{R}.
\end{eqnarray*}
Since we have $X_1+2H X_0\le 0$ in \eqref{IJL} by the assumption,
we obtain 
\[
\lambda I \le -J+L\le |J|+|L|
\lesssim 
\frac{J_\ast I^{1/p}}{R^2}
+
\frac{H J_\ast I^{1/p}}{R}.
\]
Dividing the both sides by $I^{1/p}$, we have 
\[
\lambda I^{1/p'} 
\lesssim 
\frac{J_\ast }{R^2}
+
\frac{H J_\ast }{R}
\le
\left(R^{1/p'-2}+HR^{1/p'-1}\right)
\max\left\{e^{-HR/2p'}, \ e^{-HR/p'} \right\}
\]
by \eqref{Proof-Thm-5-X-1000}.
Since $\lambda>0$ and the right hand side tends to $0$ as $R$ tends to infinity by $H\ge0$, 
we have 
$\lim_{R\to\infty} I=0$, which yields 
\[
\int_0^\infty e^{(p-1)Ht} |X(t)|^p dt=0
\]
by the definition of $I$.
So that, we obtain $X=0$ as required.
\end{proof}

\vspace{10pt}

%%%%%%%%%%%%%%%%%%%%%%%%%%%%%%%%%%
%
%%%%%%%%%%%%%%%%%%%%%%%%%%%%%%%%%%

{\bf Acknowledgment.}
This work was supported by JSPS KAKENHI Grant Number 16H03940.
%The author is thankful to the anonymous referee for several comments to revise the paper.

%%%%%%%%%%%%%%%%%%%%%%%%%%%%%%%%%%%%%
%
%%%%%%%%%%%%%%%%%%%%%%%%%%%%%%%%%%%%%
{\small 

}

\end{document}